\documentclass[onefignum,onetabnum]{siamart171218}
\usepackage[utf8]{inputenc}
\usepackage{amsmath}
\usepackage{amsfonts}
\usepackage{amssymb}
\usepackage{graphicx}
\usepackage{algorithmic}
\usepackage[sort&compress, numbers]{natbib}
\usepackage{perpage}
\MakePerPage{footnote}

\headers{Optimized mixing by cutting-and-shuffling}{L. D. Smith, P. B. Umbanhowar, J. M. Ottino, and R. M. Lueptow}

\title{Optimized mixing by cutting-and-shuffling\thanks{Submitted to the editors \today.
\funding{P.B. Umbanhowar was partially supported by the National Science Foundation Contract No. CMMI-1435065.}}}

\author{Lachlan D. Smith\thanks{Department of Chemical and Biological Engineering, Northwestern University, Evanston, IL 60208, USA 
  (\email{lachlan.smith@northwestern.edu}, \email{jm-ottino@northwestern.edu}, \email{r-lueptow@northwestern.edu}).}
\and Paul B. Umbanhowar\thanks{Department of Mechanical Engineering, Northwestern University, Evanston, IL 60208, USA 
  (\email{umbanhowar@northwestern.edu}).}
\and Julio M. Ottino\footnotemark[2] \footnotemark[3] \thanks{The Northwestern Institute on Complex Systems (NICO), Northwestern University, Evanston, IL 60208, USA.}
\and Richard M. Lueptow\footnotemark[2] \footnotemark[3] \footnotemark[4]
}



\graphicspath{{figures/}}

\begin{document}

\maketitle


\begin{abstract}
Mixing by cutting-and-shuffling can be understood and predicted using dynamical systems based tools and techniques. In existing studies, mixing is generated by maps that repeat the same cut-and-shuffle process at every iteration, in a ``fixed'' manner. However, mixing can be greatly improved by varying the cut-and-shuffle parameters at each step, using a ``variable'' approach. To demonstrate this approach, we show how to optimize mixing by cutting-and-shuffling on the one-dimensional line interval, known as an interval exchange transformation (IET). Mixing can be significantly improved by optimizing variable protocols, especially for initial conditions more complex than just a simple two-color line interval. While we show that optimal variable IETs can be found analytically for arbitrary numbers of iterations, for more complex cutting-and-shuffling systems, computationally expensive numerical optimization methods would be required. Furthermore, the number of control parameters grows linearly with the number of iterations in variable systems. Therefore, optimizing over large numbers of iterations is generally computationally prohibitive. We demonstrate an \emph{ad hoc} approach to cutting-and-shuffling that is computationally inexpensive and guarantees the mixing metric is within a constant factor of the optimum. This \emph{ad hoc} approach yields significantly better mixing than fixed IETs which are known to produce weak-mixing, because cut pieces never reconnect. The heuristic principles of this method can be applied to more general cutting-and-shuffling systems.
\end{abstract}

\begin{keywords}
  mixing optimization, cutting-and-shuffling, interval exchange transformation
\end{keywords}

\begin{AMS}
  37A25, 49J21, 49K21, 49N90
\end{AMS}

\section{Introduction}

Cutting-and-shuffling has recently been shown to be an effective method for mixing granular materials \cite{Sturman2008, Sturman2012, Juarez2010, Juarez2012, Christov2014, Park2016, Park2017, Smith2017BSTresonances, Smith2017BSTCS+SF, Zaman2017} and fluids \cite{Smith2016discdef, Smith2017LSID, Jones1988, Schonfeld2004, Schultz2014, Hunt2008, Boujlel2016}. However, previous studies only consider systems where the same cut-and-shuffle action is repeated at every step, for example, cutting a deck of cards at the 10th and 20th cards, and swapping the top pile with the bottom pile. Repeating this cut-and-shuffle could potentially transform an unmixed deck of cards (e.g., organized by suit) into a ``mixed'' state (no two cards of the same suit together). However, it seems likely that a mixed state would be reached faster if the locations of the cuts were chosen strategically at each step. Here we explore how this can be accomplished.

Consider the simplest form of cutting-and-shuffling for a continuous system: cutting a 1D line interval into $m$ pieces, and permuting them to reassemble the line. An example of this process, known as an Interval Exchange Transformation (IET), is shown in \cref{fig:IET}(a), where the line, initially consisting of equal length black, gray and white segments, is cut into four pieces, and the permutation $\pi = 3142$ represents the rearrangement order of the four cut pieces. The notation $\pi=3142$ means that the third cut piece moves to the first position, the first piece moves to the second position, fourth piece moves to third position, and second piece moves to fourth position. In the second iteration, $N=2$, the same cut locations and permutations are used. We refer to IETs with the same cut locations and permutation at every iteration as ``fixed,'' since the same action is repeated; these can be thought of as analogues of time-periodic fluid flows. Fixed IETs like that in \cref{fig:IET}(a) can generate good mixing.

While fixed IETs can produce mixing, i.e.\ complete homogenization of material given infinite time \cite{Keane1975, Keane1977, Viana2006}, mixing by cutting-and-shuffling occurs at best at a polynomial rate, which is significantly slower than the exponential mixing rates produced by chaotic systems \cite{Veech1978, Katok1980, Masur1982, Avila2007, Sturman2012, Christov2011}. To demonstrate, consider the time-space plots shown in \cref{fig:IET}(b,c). Here, the iterates of the 1D line interval under the cut-and-shuffle operation are stacked so that the initial condition ($N=0$) is at the top and iteration $N=65$ is at the bottom (many intermediate iterates have been omitted). When the lengths of the cut pieces are rationally dependent [\cref{fig:IET}(b)], or the permutation is reducible\footnote{A permutation $\pi$ is said to be irreducible if applying $\pi$ to any of the subsets $\{1\}, \{1,2\},\dots,\{1,2,\dots,L-1\}$ does not yield a permutation of just the elements of that subset. A permutation that is not irreducible is termed reducible.}, the colors do not mix well. In fact, the colors reassemble to their initial condition for the example in \cref{fig:IET}(b). When the lengths of the cut pieces are rationally independent, and the permutation is irreducible and not a rotation, the IET satisfies the Keane minimality condition \cite{Keane1975, Keane1977, Viana2006} and yields good mixing of the colors [\cref{fig:IET}(c)]. However, additional parametric freedom in the mixing process can be introduced by allowing the cut locations and/or permutation to change at every iteration. We call this approach ``variable,'' and it is analogous to time-dependent fluid flow, which changes at every instant. In this study we consider variable IETs such that the permutation is fixed, but the cut locations can vary. Intuitively, the additional control freedom at each iteration should enable faster and better mixing when using variable IETs rather than fixed IETs.

Finding the best variable protocol is an optimal control problem. That is, the goal is to minimize or maximize an objective function (the degree, rate, or efficiency of mixing) over the parameter space. Optimal mixing has been studied for time-dependent laminar fluid flows \cite{Cortelezzi2008, Froyland2016, Froyland2017, Mathew2007, Rallabandi2017, Vikhansky2002, Mitchell2017}, in which mixing occurs by stretching-and-folding. Here we consider optimal mixing by cutting-and-shuffling. There are a number of differences between mixing produced by cutting-and-shuffling compared to traditional mixing produced by stretching-and-folding. To start with, cutting-and-shuffling results in discontinuous interfaces when applied to a scalar field, even when the initial scalar field is smooth. This means mixing can be highly sensitive to parameter choices, even for a single iteration. In addition, standard mixing metrics such as the mix-norm \cite{Mathew2005}, intensity of segregation \cite{Danckwerts1952}, and interface length \cite{Leong} do not vary smoothly, or even continuously, across the parameter space, which poses unique challenges in applying optimal control theory to cutting-and-shuffling.

\begin{figure}[tbp]
\centering
\includegraphics[width=0.45\columnwidth]{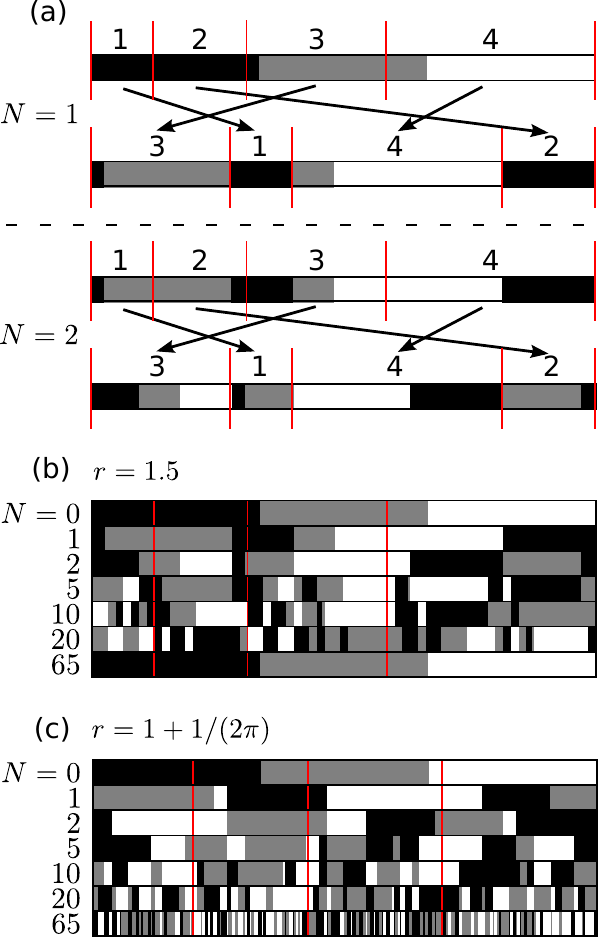}
\caption{Cutting-and-shuffling a line with a fixed IET. (a)~Two iterations of a fixed IET. The line, initially consisting of single black, gray, and white segments, is cut into four pieces, and rearranged according to the irreducible permutation $3142$ at each iteration. Cut locations are shown by red vertical lines. The cut pieces have lengths $x, rx, r^2x, r^3x$, where $r=1.5$ is the ratio of successive cut piece lengths, and $x=(r-1)/(r^4-1)$, as used by Krotter \emph{et al.} \cite{Krotter2012}. (b,c)~Iterates of the line interval are stacked vertically to create a space-time plot. With the same permutation, rational values of $r$ produce (b)~periodic dynamics, while irrational values of $r$ generate (c)~weak mixing. For fixed IETs like these, the cut locations and permutation are the same at each iteration and can produce mixing [e.g., (c)], but, if the cuts were strategically chosen at each iteration, mixing could be improved.}
\label{fig:IET}
\end{figure}

In \cref{sec:metrics}, we discuss different metrics that can be used to quantify mixing by IETs. A new metric is introduced that quantifies how well the colored segments are broken up into smaller segments and whether the different colors are evenly distributed along the line. We demonstrate that neither weak-mixing fixed IETs, nor a naive, \emph{ad hoc}, variable approach -- cutting the longest segments of each color in half at each iteration -- yield optimal mixing.

In \cref{sec:analytic_optima}, we demonstrate how to analytically find optimal variable IETs with fixed permutation but variable cut locations. We show that optimal mixing can only be produced when the permutation takes a specific form, and demonstrate how to find cut locations that achieve optimal mixing. Then in \cref{sec:time_dependent-vs-time-periodic} we show that optimal variable IETs produce significantly better mixing than general fixed IETs, and when the initial condition is more complex, there is more improvement gained by using an optimal variable IET compared to both random fixed IETs and optimal fixed IETs.

In \cref{sec:long-time_optimization}, we discuss strategies for mixing by cutting-and-shuffling over many iterations. For general cutting-and-shuffling systems, both fixed and variable, finding protocols that optimize mixing over many iterations can be computationally expensive. One alternative option to achieve good, though sub-optimal, mixing is to use geometric properties of fixed piecewise isometries. For instance, fixed IETs that satisfy the Keane minimality condition \cite{Keane1975, Viana2006, Avila2007}, such as in \cref{fig:IET}(c), yield weak-mixing. However, for initial conditions like the three-color initial condition in \cref{fig:IET}, it is likely that a weak-mixing fixed IET will produce slow mixing, since identically colored segments sometimes reconnect after the cut pieces are shuffled. This means colored segments will not always be broken into smaller and smaller pieces. Another alternative is to optimize over short time-horizons. We demonstrate this approach for IETs using the computationally inexpensive \emph{ad hoc} method introduced in \cref{sec:metrics}, which is equivalent to a one-iteration time-horizon optimization. This \emph{ad hoc} method produces significantly better mixing than weak-mixing fixed IETs, because segments of the same color never reconnect. Similar heuristic approaches could be applied to more general cutting-and-shuffling systems, cutting the largest unmixed regions at each iteration.

\section{Optimal variable cutting-and-shuffling}

\subsection{Mixing metrics} \label{sec:metrics}

Optimization of mixing requires a mixing metric. Qualitatively, for IETs with more than two differently colored segments, mixing is effective if the segments are broken up into many smaller segments \textit{and} the different colors are evenly distributed along the line. Thinking of a deck of cards, the suits would be considered well mixed if any set of four consecutive cards contains one card of each suit.

For a line interval consisting of $m$ colored segments, $\mathcal{I}_1,\dots,\mathcal{I}_m$, with lengths $|\mathcal{I}_i|$ as shown in \cref{fig:metrics}(a), one mixing metric used previously is the longest segment length
\begin{equation}
U = \max_{1\leq i \leq m} |\mathcal{I}_i|,
\end{equation}
termed the percent unmixed \cite{Krotter2012, Yu2016}. For example, $U=|\mathcal{I}_5|$ in \cref{fig:metrics}(a), and $U=1/9$ in \cref{fig:metrics}(b). Note that we assume periodic boundary conditions at the ends of the line, so segments of the same color at the start and end of the line are connected. Using periodic boundaries means that $U$ is invariant under rotations (left or right shifts). The same periodic boundary conditions have also been used in past studies \cite{Ashwin2002, Sturman2012, Froyland2016}. Finding IETs that minimize $U$ ensures that all the segment lengths are small, suggesting good mixing, but $U$ does not consider whether the colors are distributed evenly along the line. For the deck of cards analogy, $U$ is minimized if every set of two consecutive cards have different suits, but this can be achieved by riffling the hearts and diamonds, and riffling the spades and clubs. Even though $U$ is minimized, the deck would not be considered mixed, as all the red cards are separate from all the black cards.

To measure the evenness of color distribution, we calculate the longest distance between segments of the same color. Letting $d_{ij}$ represent the distance between the closest edges of the $j$-th and $(j+1)$-th segments of the $i$-th color, where $i$ spans the number of different colors and $j$ spans the number of segments with the $i$-th color, the evenness metric is
\begin{equation}
D = \max_{i,j} d_{ij}.
\end{equation}
In \cref{fig:metrics}(a), $D=d_{32}$, and in \cref{fig:metrics}(b), $D=2/9$. When the colors are evenly distributed $D$ is small, and when they are clustered together $D$ is large. Note that if there are only two colors, then $D$ and $U$ are identical, otherwise they are generally different. 

\begin{figure}[tbp]
\centering
\includegraphics[width=0.6\columnwidth]{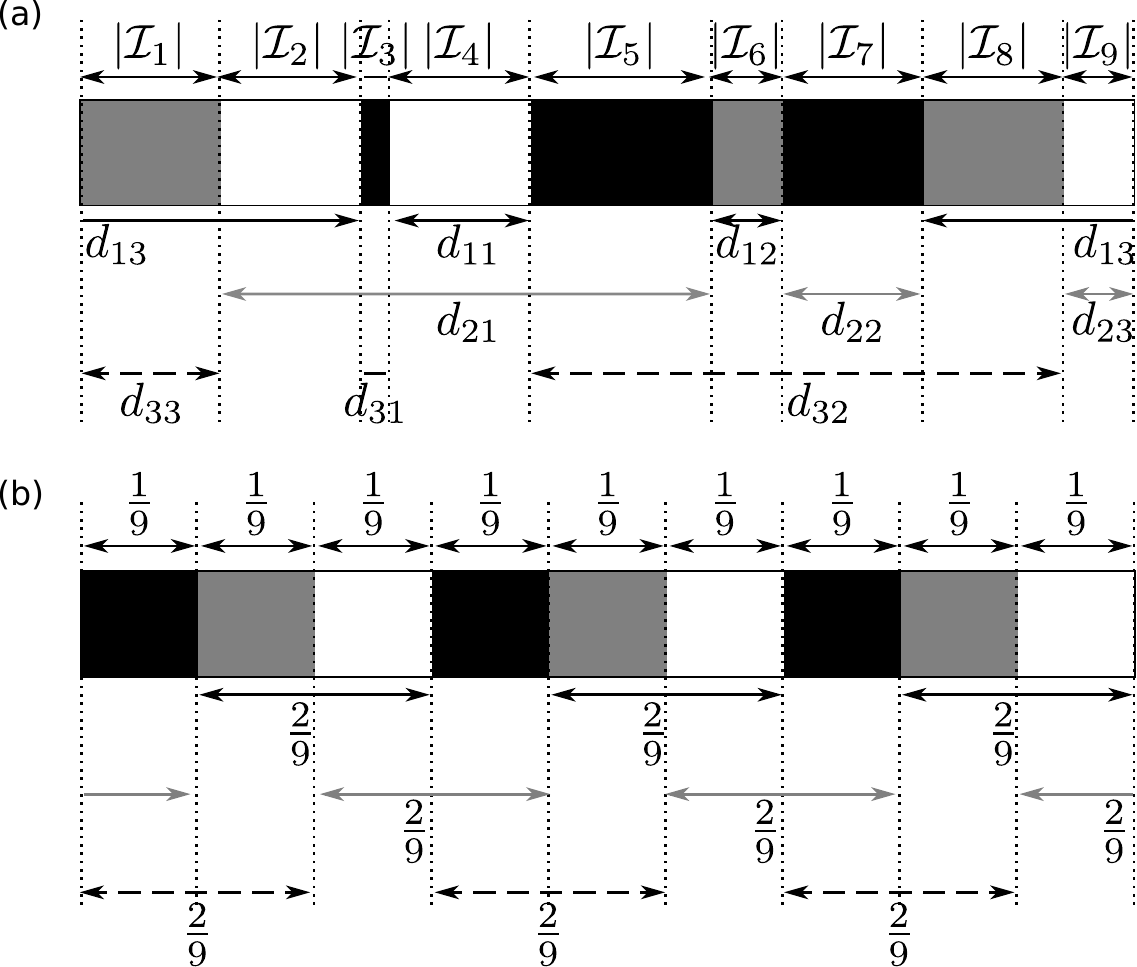}
\caption{Different metrics evaluate different aspects of mixing colors on the line. (a)~$|\mathcal{I}_i|$ measures the length of the $i$-th segment, whereas $d_{ij}$ measures the distance between the closest edges of the $j$-th and $(j+1)$-th segments of the $i$-th color. The longest segment is $U=|\mathcal{I}_5|$, and the longest distance between segments of the same color is $D=d_{32}$. (b)~The optimal case, where all $|\mathcal{I}_i|$ are equal and all $d_{ij}$ are equal, such that $U=1/9$ and $D=2/9$.}
\label{fig:metrics}
\end{figure}

For an IET that starts with $k$ differently colored segments and uses a length $L$ permutation, at most $L-1$ new segments are created per iteration. Therefore, there are at most $N(L-1) + k$ total segments after $N$ iterations, and $U$ has a minimum of $(N(L-1)+k)^{-1}$, when all the segments are equal length. We scale $U$ by its minimum value,
\begin{equation}
 \hat{U}(N,L,k) = \left( N(L-1) + k \right) U,
 \end{equation} 
to measure how well an IET has cut the colored segments into smaller pieces compared to the known optimum. Values of $\hat{U}$ greater than $1$ correspond to suboptimal cutting of the colored segments into smaller pieces. Similarly, $D$ is minimized when there are $k-1$ differently colored segments between each pair of same-colored segments [e.g., \cref{fig:metrics}(b)], and each segment has length $(N(L-1)+k)^{-1}$, so we scale $D$:
\begin{equation}
\hat{D}(N,L,k) = \frac{N(L-1) + k}{k-1} D.
\end{equation}
Values of $D$ greater than $1$ correspond to uneven distributions of the different colors.

For mixing colors on the line, there are two competing interests -- small segment lengths ($U$) and evenly distributed colors ($D$) -- and an optimal protocol for one metric is not necessarily an optimal for the other. We use simple linear scalarization to handle this multi-objective optimization task by defining the metric, $\Phi=(\hat{U} + \hat{D})/2$ as the total measure of mixing. Values of $\Phi$ greater than $1$ mean that either the segments are not cut into small pieces or the colors are not evenly distributed, or both. When segments are equal in length and colors are uniformly distributed, $\Phi=\hat{U}=\hat{D} = 1$.

Considering the fixed IET in \cref{fig:IET}(c), even though it is known to be weak-mixing (i.e.\ given an infinite number of iterations it will completely homogenize the colors), we see that after 65 iterations mixing is suboptimal. Only $148$ segments are produced, compared to the maximum $N(L-1)+k=198$. In addition, the segments are clearly not equal in length, quantified by $\hat{U}=3.4$, meaning the longest segment is more than three times longer than in an optimal case. Similarly, the colors are not well distributed, $\hat{D}=8.3$. There appears to be a darker region near the middle of the line, where black segments are clustered together, and a lighter region near the right-hand end, where white segments are clustered together. In this case, $\Phi=5.9$ is significantly higher than an optimal variable IET for which $\Phi=1$.

\begin{figure}[tbp]
\centering
\includegraphics[width=0.5\textwidth]{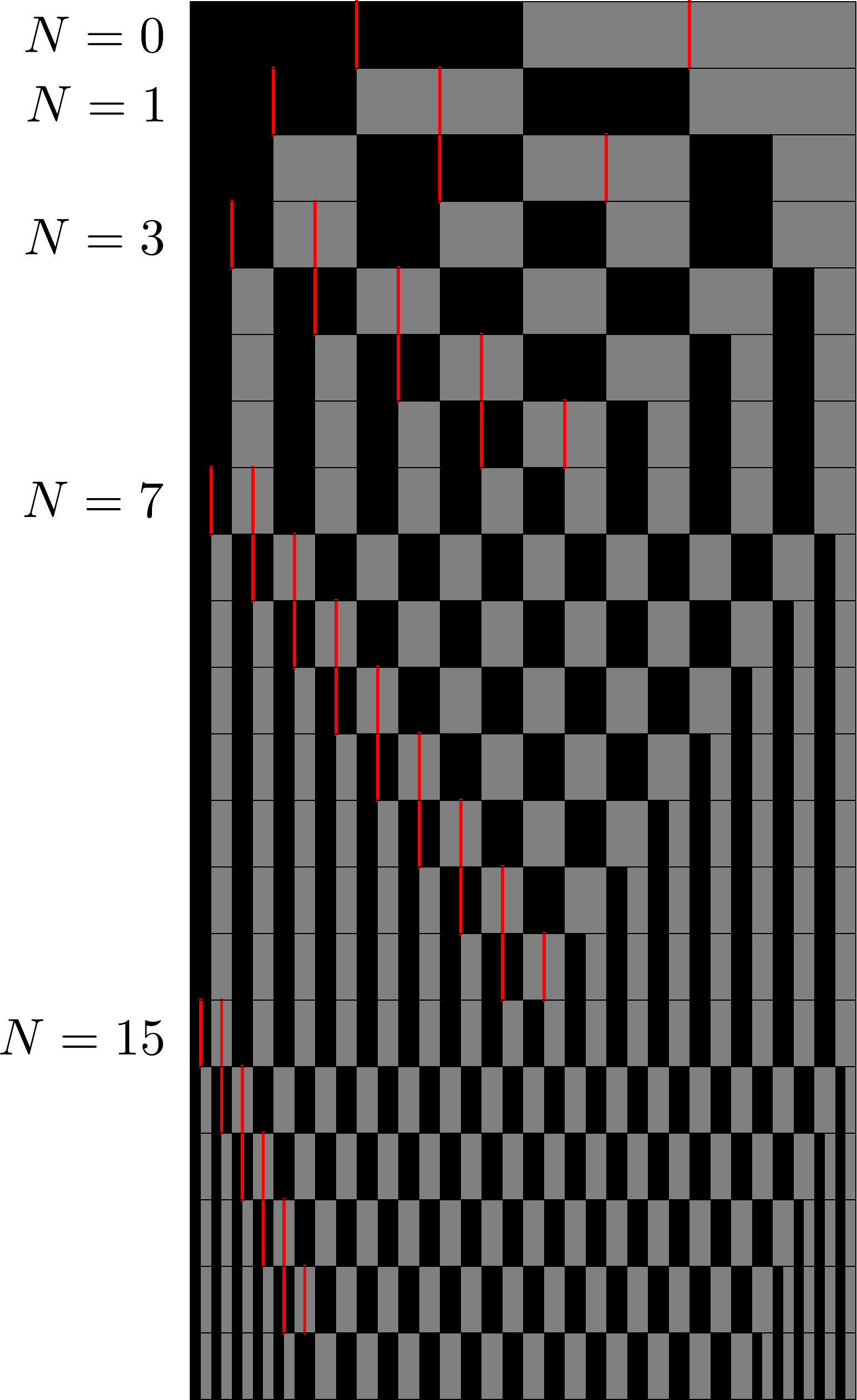}
\caption{Mixing the two-color initial condition (top row) using the permutation $132$ and the \emph{ad hoc} method, where the two cuts (red) are made in the middle of the longest black and gray segments at each iteration. The bottom row at $N=20$ corresponds to $\Phi=21/16 \approx 1.3$, whereas $\Phi=1$ at $N=1,3,7,15$.}
\label{fig:ad_hoc_mixing}
\end{figure}

For a given number of cuts and a given number of initial colors, optimal mixing corresponds to evenly distributed segments of equal length like the case shown in \cref{fig:metrics}(b). The question is: Can variable IETs be found that achieve optimal mixing? A naive, \emph{ad hoc}, approach, is to cut the longest segments in half at each iteration. This is demonstrated for the two-color initial condition in \cref{fig:ad_hoc_mixing}, where the longest black and gray segments at each iteration are cut in half, and the pieces are rearranged according to the permutation $\pi=132$. For such a simple approach, this method performs remarkably well. Whenever $N=2^i-1$, the segments all have equal length, and the maximum number of segments, $N(L-1)+k=2N+2$, has been created, meaning $\Phi=1$ ($N=1,3,7,15$ in \cref{fig:ad_hoc_mixing}). However, for any other value of $N$, mixing is suboptimal. For instance, $\Phi=21/16\approx 1.3$ at $N=20$ (the bottom row of \cref{fig:ad_hoc_mixing}). In the next section, we demonstrate how to find optimal variable IETs ($\Phi=1$).


\subsection{Finding optimal variable IETs} \label{sec:analytic_optima}

Consider the two-color initial condition with the first half of the line colored black, and the second half gray [top row of \cref{fig:matching_edges}(a)], and a variable IET $T_{L,\pi}$ that cuts the line into $L$ pieces, with the $j$-th cut in the $N$-th iteration located at $c_{Nj}$, and rearranges the cut pieces according to a permutation $\pi$. After cutting, but before rearrangement, there are $L$ cut pieces, each with a left edge and a right edge. After rearrangement, new segments and new interfaces can only be created if left and right edges of different colors join. Hence, at most $L-1$ new segments/interfaces can be created (taking into account the existing interface in the initial condition between the start and end of the line). For example, the permutation $132$ has three cut pieces, and creates at most two new segments per iteration, as demonstrated in \cref{fig:matching_edges}(a). Therefore, after $N$ iterations there can be at most $N(L-1)$ new segments. However, not all variable IETs produce $L-1$ new segments per iteration, the trivial example being any IET that uses the identity permutation, which cuts but does not shuffle, leaving the line unchanged regardless of the cut locations. Similarly, rotation permutations, i.e.\ those of the form $R(i)=i+r\!\! \mod L$, do not shuffle, and so cannot produce new segments \cite{Yu2016}. The following proposition gives the necessary and sufficient conditions for a variable IET to be able to produce $L-1$ new segments per iteration.

\begin{figure}[tbp]
\centering
\includegraphics[width=0.9\columnwidth]{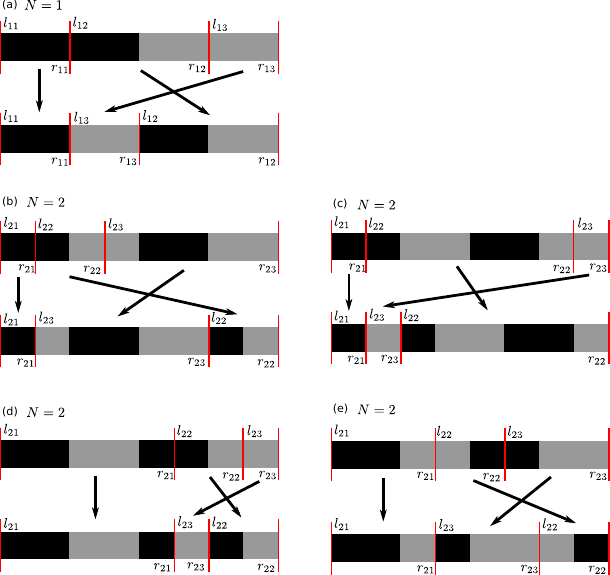}
\caption{Demonstration of \cref{prop:2} for the two-color initial condition [top row of (a)] and permutation $\pi=132$. The colors of the left edges of cut pieces are labelled $l_{N1},l_{N2},l_{N3}$, and the colors of the right edges of cut pieces are labelled $r_{N1},r_{N2},r_{N3}$. (a)~In the first iteration, $N=1$, cutting within the black segment and within the gray segment yields the maximum number of segments, $2N+2=4$. This is because $r_{1j}=l_{1,j+1}$, $j=1,2$, i.e.\ cuts are within segments, and $r_{1,\pi(j)}\neq l_{1,\pi(j+1)}$, i.e.\ each cut creates a new black-gray interface after rearrangement by $\pi$. (b--d)~For $N=2$, as long as the first cut is located within a black segment and the second cut is located within a gray segment, then the maximum number of segments, $2N+2=6$, is produced. This is because $l_{2j}=l_{1j}$ and $r_{2j}=r_{1j}$ for $j=1,2,3$, and so $r_{2j}=l_{2,j+1}$, $j=1,2$, and $r_{2,\pi(j)}\neq l_{2,\pi(j+1)}$. (e)~For $N=2$, locating the first cut within a gray segment and the second cut within a black segment does not yield the maximum number of segments. There are still only four segments (due to periodic boundary conditions the black segments at the start and end of the line form a single segment).}
\label{fig:matching_edges}
\end{figure}

\begin{proposition} \label{prop:valid_perms}
For the left-right two-color initial condition, there exists a variable IET $T_{L,\pi}$ that creates $L-1$ new segments per iteration if and only if $L$ is odd and $\pi$ is of the form 
\begin{equation} \label{eq:valid_perms}
\pi=1\, \pi_2(n+2) \left[ \prod_{i=2}^k \pi_1(i) \pi_2(i+n+1) \right] (n+1) \left[\prod_{i=k+1}^n \pi_1(i) \pi_2(i+n+1) \right],
\end{equation}
where $n=(L-1)/2$, $\pi_1$ is a permutation of the set $\{2,\dots,n\}$, $\pi_2$ is a permutation of the set $\{n+2,\dots,L=2n+1\}$, and $1 \leq k \leq n$, or $\pi$ is a rotation of a permutation of this form.
\end{proposition}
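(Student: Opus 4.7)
The plan is to establish both necessity and sufficiency by tracking the edge colors of the $L$ cut pieces in the manner of \cref{fig:matching_edges}. The two requirements to create $L-1$ new segments in one iteration are (a) every cut lies inside a monochromatic segment of the current state, so $r_{Nj}=l_{N,j+1}$, and (b) every cyclic junction after permutation is a color change, $r_{N,\pi(j)}\neq l_{N,\pi(j+1)}$. Starting from the two-color initial condition and assuming (a), the $L$ pieces consist of $n_{BB}$ pieces of type $(B,B)$, $n_{GG}$ of type $(G,G)$, exactly one $BG$ piece containing the interface at $1/2$, and exactly one $GB$ piece containing the periodic interface at $0\equiv 1$, with $n_{BB}+n_{GG}=L-2$. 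After permutation, the total number of interfaces equals $2$ (internal to the two bichromatic pieces) plus the number of change junctions, which must equal $k+(L-1)=L+1$; hence exactly $L-1$ of the $L$ cyclic junctions must be color changes. Since a closed two-color cycle admits only an even number of color changes, $L+1$ must be even, forcing $L$ odd.

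To deduce the form of $\pi$, write $L=2n+1$ and pick cuts so that $n_{BB}=n,\,n_{GG}=n-1$, labeling pieces $1,\dots,L$ in cyclic order starting from the first $BB$ piece after the $GB$ piece. Then labels $\{1,\dots,n\}$ are $BB$, label $n+1$ is $BG$, labels $\{n+2,\dots,2n\}$ are $GG$, and label $L$ is $GB$. Call a piece ``$B$-starting'' if its left edge is $B$ (so $BB$ and $BG$) and ``$G$-starting'' otherwise ($GG$ and $GB$). For the new arrangement to have exactly one same-color junction, the sequence $l_{\pi(1)},\dots,l_{\pi(L)}$ must alternate $B,G,B,G,\dots$ cyclically with a single parity defect. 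A direct case analysis of junction types shows that the unique $BG$ piece (the only $B$-starting, $G$-ending piece) must be inserted at some position $2k+1$ where it absorbs the parity flip; the unique $GB$ piece (the only $G$-starting, $B$-ending piece) inevitably produces the single same-color junction wherever it lands among the $G$-starting slots; and the remaining $BB$ labels $\{2,\dots,n\}$ and $GG$ labels $\{n+2,\dots,L-1\}$ fill the other $B$-starting and $G$-starting slots in orders parameterized by permutations $\pi_1$ and $\pi_2$. This yields exactly the form in \cref{eq:valid_perms}.

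The allowance for rotations in the proposition captures the freedom to choose a different starting label when cyclically labeling pieces: composing $\pi$ with a rotation permutation shifts the labels cyclically without rearranging the pieces themselves. The converse direction — that no $\pi$ outside this class (up to rotation) achieves $L-1$ new segments for any admissible cut choice — will be the main obstacle. I plan to handle it by showing rigidity: any $\pi$ admitting $L-1$ change junctions must realize the alternating-with-one-defect arrangement above, which pins $\pi$ down modulo the choices of $\pi_1,\pi_2,k$ and a cyclic relabeling; any other assignment of piece types to positions necessarily produces at least two same-color junctions, contradicting the count. Finally, to verify ``per iteration'' for $N>1$: after any number of iterations the state is still a cyclic arrangement of alternating monochromatic two-color segments, so placing the first $n+1$ cuts in a single $B$-segment and the remaining $n$ cuts in a single $G$-segment of the current state produces $L$ pieces with exactly the edge-type pattern required by the permutation form (with internal interfaces absorbed into the $GB$ piece), and the same $\pi$ creates $L-1$ new interfaces; the claim then follows by induction on $N$.
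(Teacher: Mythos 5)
Your overall strategy (track edge colors of the cut pieces, use a parity argument for $L$ odd, reduce to the first iteration by induction, and normalize via rotations) matches the paper's, but there is a concrete error in your piece census that the rest of the argument is built on. For the left--right initial condition with all $L-1$ cuts interior to monochromatic segments, the pieces are $[0,c_1],[c_1,c_2],\dots,[c_{L-1},1]$: the point $0\equiv 1$ is a \emph{junction} between the last and first pieces, never an interior point of a piece, so there is no $GB$ piece. The correct census (the paper's) is $n$ pieces of type $BB$, $n$ of type $GG$, and exactly one bichromatic $BG$ piece containing the interface at $1/2$. Consequently the target configuration is that \emph{all} $L$ cyclic junctions are color changes (one internal interface plus $L$ junction interfaces gives the required $L+1=k+(L-1)$ total), not $L-1$ changes with ``a single parity defect.'' Your conclusion that $L$ is odd survives only by accident, because the parity constraint acts on the total interface count $L+1$ regardless of how you apportion it between internal and junction changes. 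But the structural derivation of $\pi$ does not survive: there is no ``unique $G$-starting, $B$-ending piece'' to ``absorb'' a same-color junction, your $\pi_2$ ends up a permutation of the $(n-1)$-element set $\{n+2,\dots,L-1\}$ rather than the $n$-element set $\{n+2,\dots,L\}$ required by \cref{eq:valid_perms}, and the ``direct case analysis of junction types'' you invoke would, if carried out, contradict your own setup. The paper instead observes that the left-edge colors after rearrangement must alternate perfectly $B,G,B,G,\dots$, which forces the arrangement $b_1\, g_{\cdot}\, b_{\cdot}\, g_{\cdot}\cdots$ with the single $BG$ piece insertable after any gray piece; that is where the parameter $k$ comes from.

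Two further problems: your induction step for $N>1$ places ``the first $n+1$ cuts in a single $B$-segment and the remaining $n$ cuts in a single $G$-segment,'' which is $2n+1>L-1$ cuts; the paper's \cref{prop:2} instead only requires the sequence of left/right edge colors of the cut pieces to reproduce (or globally swap) that of the first iteration, which permits cuts in \emph{any} appropriately colored segments and is what makes the later enumeration of optima possible. And the necessity direction --- that no $\pi$ outside the stated class works --- is deferred in your writeup (``I plan to handle it by showing rigidity''); in the paper this is exactly the content of \cref{prop:odd_L_cuts_inside_segments} (balancing black and gray left/right edges forces $L$ odd and $n$ cuts in each color) combined with the forced alternation of left-edge colors, so it is not an optional add-on but the main body of the proof.
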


By a rotation of a permutation $\pi\in S_n$, we mean a permutation $\tau\in S_n$ of the form $\tau(i) = \pi (i+r\!\! \mod L)$ for some $r$. For example, the rotations of the permutation $\pi=1324$ are $3241$, $2413$, and $4132$. The rotated permutation $\tau$ is the composition $\pi \circ R$, where $R$ is the rotation permutation $R(i) = i+r\!\! \mod L$.

As an example of \cref{prop:valid_perms}, for $L=7$, choose $\pi_1 (2\, 3)=(3\, 2)$, $\pi_2 (5\, 6\, 7) = (6\, 5\, 7)$, and $k=2$, then 
\begin{equation} 
\pi = 1\, \pi_2(5) \pi_1(2) \pi_2(6)\, 4\, \pi_1(3) \pi_2(7) = 1635427.
\end{equation}
By \cref{prop:valid_perms}, there exist cut locations such that the corresponding variable IET, $T_{L,\pi}$, creates $L-1=6$ new segments per iteration. The same is true for any rotation of $\pi$, e.g.\ $6354271$.

\begin{corollary}
For $L=2n+1$ odd, there are $(n!)^2(2n+1)$ permutations $\pi$ for which there is a variable IET $T_{L,\pi}$ that can produce $L-1$ new segments per iteration.
\end{corollary}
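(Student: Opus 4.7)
The plan is to count the permutations of the explicit form \eqref{eq:valid_perms} first, then account for rotations, and finally confirm there is no overcounting between the two.

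For the base count, I would identify the free parameters in \eqref{eq:valid_perms}: the choice of $\pi_1$ is a bijection on the $(n-1)$-element set $\{2,\dots,n\}$, giving $(n-1)!$ possibilities; the choice of $\pi_2$ is a bijection on the $n$-element set $\{n+2,\dots,2n+1\}$, giving $n!$ possibilities; and $k$ ranges over $\{1,\dots,n\}$, giving $n$ possibilities. Different choices yield different permutations because $k$ is recovered as the position of the value $n+1$ minus one (it sits at position $2k+1$), and once $k$ is fixed, the odd and even positions uniquely determine $\pi_1$ and $\pi_2$. Hence the number of permutations of the exact form \eqref{eq:valid_perms} is $n\cdot (n-1)!\cdot n! = (n!)^2$.

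Next I would handle the rotations. The crucial observation is that every permutation of the explicit form \eqref{eq:valid_perms} places the value $1$ in position $1$. Consequently, for a fixed $\pi$ of this form, its $L=2n+1$ cyclic rotations place $1$ in $L$ distinct positions, and so the orbit $\{R_r\pi : 0\le r\le 2n\}$ consists of $2n+1$ distinct permutations; moreover, only the identity rotation ($r=0$) places $1$ at position $1$, so no non-trivial rotation of $\pi$ can itself be of the form \eqref{eq:valid_perms}.

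For the final tally, I would verify that the rotation orbits of distinct base permutations are disjoint: if $R_r\pi = R_{r'}\pi'$ with $\pi,\pi'$ both of form \eqref{eq:valid_perms}, then $\pi = R_{r'-r}\pi'$; since $\pi$ has $1$ in position $1$ and the only rotation of $\pi'$ with that property is the identity, we get $r=r'$ and $\pi=\pi'$. Thus the set of permutations allowed by \cref{prop:valid_perms} partitions into $(n!)^2$ disjoint rotation orbits, each of size $2n+1$, giving the stated total $(n!)^2(2n+1)$. The only step that requires any real care is ensuring the rotation orbits have full size $2n+1$ and are pairwise disjoint, but both facts follow immediately from tracking the unique position of the value $1$, so I do not anticipate any substantive obstacle.
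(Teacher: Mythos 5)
Your proof is correct and follows essentially the same route as the paper's: count $(n-1)!$ choices for $\pi_1$, $n!$ for $\pi_2$, $n$ for $k$, and multiply by the $2n+1$ rotations. The only difference is that you explicitly verify the parametrization is injective and the rotation orbits are full-sized and disjoint (by tracking the position of the value $1$), details the paper's proof takes for granted.
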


\begin{proof}
There are $(n-1)!$ choices for the permutation $\pi_1$, $n!$ choices for the permutation $\pi_2$, $n$ choices for $k$, and $2n+1$ rotations of a permutation of length $2n+1$. Therefore, the number of rotations of permutations of the form \cref{eq:valid_perms} is
\begin{equation}
n(n-1)!n!(2n+1)=(n!)^2(2n+1).
\end{equation}
\end{proof}

We prove \cref{prop:valid_perms} using the following results.

\begin{proposition} \label{prop:2}
For the left-right two-color initial condition and a permutation $\pi$ with length $L$, if there exist cut locations $c_{1j}$, $j=1,\dots,L-1$, such that cutting and rearranging the initial condition according to $\pi$ yields $L-1$ new segments, then there exist cut locations $c_{Nj}$ for all $N>1$ and $j=1,\dots,L-1$ such that the corresponding variable IET, $T_{L,\pi}$, creates $L-1$ new segments per iteration.
\end{proposition}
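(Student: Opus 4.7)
The plan is to prove the statement by induction on $N$, with the base case $N=1$ given by hypothesis.

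For the inductive step, assume iterations $1$ through $N$ each produced $L-1$ new segments. I will reduce the geometric task for iteration $N+1$ to a combinatorial one. Encode the relevant edge-color information as a sequence $c_0,c_1,\ldots,c_L$, where $c_0$ is the color at $x=0^+$ of the line emerging from iteration $N$, each $c_j$ for $1 \le j \le L-1$ is the color of the monochromatic segment in which the $j$-th cut $c_{N+1,j}$ is to be placed, and $c_L$ is the color at $x=1^-$. Producing $L-1$ new segments in iteration $N+1$ is then equivalent to (i) each cut lying within a monochromatic segment (so each $c_j$ is well-defined), (ii) $c_{\pi(j)} \neq c_{\pi(j+1)-1}$ for $j=1,\ldots,L-1$ (every internal junction of the rearranged line creates a new interface), and (iii) $c_{\pi(L)} \neq c_{\pi(1)-1}$ (the wraparound interface is preserved). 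The boundary values $c_0$ and $c_L$ are fixed by iteration $N$'s output; because iteration $N$ satisfied (iii), the wraparound is still an interface, so $c_0 \neq c_L$ and $(c_0,c_L)$ is either $(B,G)$ or $(G,B)$.

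In iteration 1 the boundary pair is $(B,G)$ by the left-right initial condition, and the interior values $c_1^{(1)},\ldots,c_{L-1}^{(1)}$ form a monotone sequence -- some $p$ $B$'s followed by $L-1-p$ $G$'s -- because cuts in $[0,1/2]$ land in the single black segment and cuts in $[1/2,1]$ land in the single gray segment. This iteration-$1$ sequence satisfies (ii) and (iii) by hypothesis. For iteration $N+1$: if $(c_0,c_L)=(B,G)$ I target $c_j=c_j^{(1)}$, while if $(c_0,c_L)=(G,B)$ I target the $B \leftrightarrow G$ swap of $c_j^{(1)}$, which satisfies (ii) and (iii) by the black-gray symmetry of the problem. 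In either case the target interior sequence is monotone.

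Realising the monotone target by actual cuts in the iteration-$N$ line is then straightforward. That line has $N(L-1)+2$ alternating black and gray segments, all of positive length, with total measure $1/2$ of each color (preserved by the measure-preserving isometry). Alternation plus presence of both colors guarantees an adjacent segment pair with one color immediately to the left of the other; placing the first block of $p$ cuts in the left segment and the remaining $L-1-p$ cuts in the right segment at distinct positions yields $c_{N+1,1} < \cdots < c_{N+1,L-1}$ realising the required color sequence, completing the induction.

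The main obstacle is handling the case $(c_0,c_L)=(G,B)$, where the boundary colors have swapped relative to iteration 1; this is resolved by the $B \leftrightarrow G$ symmetry, which lets us reuse iteration 1's combinatorial solution with colors swapped. Once the combinatorial reduction and the symmetry are in place, the existence of the required cut positions follows immediately from the alternating structure of the line.
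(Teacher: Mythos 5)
Your proof is correct and follows essentially the same route as the paper's: replicate the iteration-1 edge-color pattern (or its black--gray swap, depending on the color of the left-most segment after the previous iteration) at every subsequent step. You are somewhat more explicit than the paper in verifying that cut locations realizing the target color sequence actually exist (via the monotonicity of the iteration-1 sequence and the alternating segment structure), a point the paper leaves implicit.
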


\begin{proof}
Suppose there exist cut locations $c_{1j}$, $j=1,\dots,L-1$, such that cutting and rearranging the initial condition according to $\pi$ produces $L-1$ new segments. Let $l_{N1},\dots,l_{NL}$ denote the colors of the left edges of the cut pieces for the $N$-th iteration, and $r_{N1},\dots,r_{NL}$ denote the colors of the right edges of the cut pieces. From the initial condition, we know $l_{11}$ is black (the left edge of the line) and $r_{1L}$ is gray (the right edge of the line), and since $L-1$ new segments are created after rearrangement by $\pi$, each left edge must join with a differently colored right edge, i.e.\ $r_{1,\pi(j)}\neq l_{1,\pi(j+1)}$ for $j=1,\dots,L$. After rearrangement there are two cases, either the left-most segment is black or gray. If the left-most segment is black, for the second iteration choose the cut locations $c_{2j}$, $j=1,\dots,L-1$, such that the left and right edges satisfy $l_{2j}=l_{1j}$ and $r_{2j}=r_{1j}$, respectively, for $j=1,\dots,L$. It follows that after rearrangement by $\pi$, adjacent segments satisfy $r_{2,\pi(j)}=r_{1,\pi(j)}\neq l_{1,\pi(j+1)}=l_{2,\pi(j+1)}$, so each left edge joins with a differently colored right edge, and $L-1$ new segments have been created. 

If the left-most segment is gray, for the second iteration choose the cut locations $c_{2j}$, $j=1,\dots,L-1$, such that the left and right edges satisfy $l_{2j}\neq l_{1j}$ and $r_{2j}\neq r_{1j}$, i.e.\ the left and right edges of cut pieces have the opposite colors to the first iteration. Again, $\pi$ must join every left edge with a differently colored right edge, since that was the action in the first iteration. Choosing the opposite color sequence is equivalent to switching the two colors (black segments become gray, and vice versa), then performing the cut and shuffle, and then switching the two colors back.

In successive iterations, as long as the cut locations $c_{Nj}$ are chosen such that the left edges $l_{Nj}$ and right edges $r_{Nj}$ of cut pieces have the colors $l_{11},\dots,l_{1L}$ and $r_{11},\dots,r_{1L}$ (or their opposites if the first colored segment at the $N$-th iteration is gray), then $L-1$ new segments will be created.
\end{proof}

For example, for the two-color initial condition, variable IETs with permutation $\pi=132$ create two new segments in the first iteration, $N=1$, as long as the first cut $c_{11}$ is in the black segment and the second cut $c_{12}$ is in the gray segment, as shown in \cref{fig:matching_edges}(a). In successive iterations, $N\geq 2$, two new segments are created if the first cut $c_{N1}$ is in a black segment, and the second cut $c_{N2}$ is in a gray segment, as shown in \cref{fig:matching_edges}(b--d) for $N=2$. Note that the cuts do not need to occur in the first black and gray segments, e.g., \cref{fig:matching_edges}(c,d).

\Cref{prop:2} shows that the creation of new segments only depends on the colors of the edges of the cut pieces, and does not depend on the colors within the interior of each cut piece.

From \cref{prop:2}, it suffices to consider only whether there exist cut locations for a permutation that can create $L-1$ new segments in the first iteration. 

\begin{proposition} \label{prop:rotations}
For a permutation $\pi$, if there exist cut locations $c_{Nj}$ such that the corresponding variable IET $T_{L,\pi}$ yields $L-1$ new segments per iteration, then for any rotation $\tau$ of $\pi$, there exist cut locations $c'_{Nj}$ such that $T_{L,\tau}$ yields $L-1$ new segments per iteration.
\end{proposition}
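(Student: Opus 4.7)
The plan is to use \cref{prop:2} to reduce the claim to a single iteration, and then to observe that cyclic invariance of the interface count makes the one-iteration version immediate. By \cref{prop:2}, it suffices to exhibit cut locations $c'_{1,j}$ for which $T_{L,\tau}$ creates $L-1$ new segments in its first iteration; \cref{prop:2} then supplies cut locations in all later iterations.

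I would simply take $c'_{1,j} = c_{1,j}$. The cuts partition $[0,1]$ into identical pieces $P_1,\ldots,P_L$ for both transformations, and the two IETs differ only in how those pieces are reassembled. Writing $\tau = \pi \circ R$ with $R(i) = i + r \pmod{L}$, the output of $T_{L,\pi}$ reads, from left to right, $P_{\pi(1)}, P_{\pi(2)}, \ldots, P_{\pi(L)}$, while the output of $T_{L,\tau}$ reads $P_{\pi(r+1)}, \ldots, P_{\pi(L)}, P_{\pi(1)}, \ldots, P_{\pi(r)}$. Viewed as cyclic sequences on the circle (with periodic boundary conditions), these are identical, differing only by a shift of starting index.

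Counting interfaces in either output, every interface is either (i) interior to some piece $P_j$, in which case it is unchanged by the rearrangement, or (ii) located at a joint between two cyclically adjacent pieces. The multiset of cyclically adjacent piece-pairs is invariant under the cyclic shift, so the joint-interface count---and therefore the total interface count, which equals the segment count on the circle---is the same for $T_{L,\pi}$ and $T_{L,\tau}$. Thus $T_{L,\tau}$ creates exactly $L-1$ new segments in the first iteration, and \cref{prop:2} promotes this to all iterations.

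The main (mild) obstacle is making explicit that the ``new segments'' count, which the paper defines via periodic boundary conditions, depends only on the cyclic sequence of pieces and not on where that sequence is anchored in $[0,1]$. Once this cyclic invariance is stated cleanly, no adjustment of cut locations to compensate for the piece-length-weighted rotation of the line is needed, and the argument reduces to the bookkeeping above.
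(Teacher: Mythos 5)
Your proposal is correct and follows essentially the same route as the paper: take $c'_{1j}=c_{1j}$, note that $T_{L,\tau}$ with $\tau=\pi\circ R$ just performs the $\pi$-rearrangement followed by a cyclic shift of the pieces, so the segment count under periodic boundary conditions is unchanged, and then invoke \cref{prop:2} to extend to all iterations. Your explicit justification of the cyclic invariance of the interface count is slightly more detailed than the paper's one-line observation, but the argument is the same.
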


\begin{proof}
The action of the IET with permutation $\tau(i) = \pi (i+r \!\!\mod L)$, i.e.\ $\tau$ is a rotation of $\pi$, is to perform the rearrangement of the cut pieces by $\pi$, and then shift all the segments along the line. Therefore, if $L-1$ new segments are created in the first iteration by $T_{L,\pi}$ with cuts $c_{1j}$, then the same number of segments is created by $T_{L,\tau}$ with cuts $c'_{1j}=c_{1j}$. From \cref{prop:2} there exist cut locations $c'_{Nj}$ for all $N>1$ such that $T_{L,\tau}$ produces $L-1$ new segments per iteration. 
\end{proof}

Therefore, we need only consider permutations $\pi$ such that $\pi(1)=1$, as all other permutations are rotations of these.

\begin{proposition} \label{prop:odd_L_cuts_inside_segments}
For the left-right two-color initial condition and a permutation $\pi$, there exist cut locations $c_{Nj}$ such that the corresponding variable IET $T_{L,\pi}$ yields $L-1$ new segments per iteration only if $L$ is odd, and at each iteration $n=(L-1)/2$ cuts are within black segments, and $n$ cuts are within gray segments.
\end{proposition}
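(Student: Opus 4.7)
The plan is to prove two separate claims: that $L$ must be odd, and that at each iteration exactly $n=(L-1)/2$ cuts lie inside black segments and $n$ lie inside gray segments. For the first, I would use a parity argument. Since only two colors are present and colors strictly alternate around the periodic line, the number of monochromatic segments $s_N$ at any iteration is even. The hypothesis that $L-1$ new segments are produced at each step gives $s_{N+1}-s_N=L-1$, and with both sides even this forces $L-1$ even, i.e., $L$ odd.

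For the cut-count claim, I would analyze the edges of the $L$ cut pieces at a single iteration. Let $b$, $g$, $e$ denote cuts lying in a black segment, in a gray segment, and on an interface of the current configuration, with $b+g+e=L-1$, and let $\epsilon\in\{0,1\}$ indicate whether the periodic-boundary point at position $0$ is itself at an interface of the current configuration. The key identity is $s_{N+1}=(s_N-e-\epsilon)+J$, where $J\le L$ counts color-changing junctions in the cyclic rearrangement (including the periodic junction). Combined with $s_{N+1}-s_N=L-1$, this forces $e+\epsilon\le 1$ and $J=L-1+e+\epsilon$. For the two-color initial condition the periodic boundary lies at the black-gray interface, so $\epsilon=1$; hence $e=0$ (no interior cut on an interface) and $J=L$, making every junction in the rearrangement color-changing.

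Once all $L$ junctions are color-changing, pairing at each junction a right edge against a left edge of the opposite color yields that the total number of black piece-edges equals the total number of gray piece-edges, each equal to $L$. I would then tally black edges by source: each cut inside a black segment contributes $2$ black edges to the adjacent pieces; each cut inside a gray segment contributes $0$; and the periodic boundary, being at the black-gray interface, contributes exactly $1$. The equation $2b+1=L$ yields $b=n$, and symmetrically $g=n$.

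To extend from iteration $1$ to all subsequent iterations, I would argue by induction. Because iteration $1$'s rearrangement makes every junction color-changing, including the periodic one, the new configuration also has an interface at position $0$, so $\epsilon=1$ at iteration $2$; the same counting argument then applies. The main obstacle will be carefully ruling out the alternative scenario $\epsilon=0$, in which the arithmetic would permit $e\le 1$ and a potentially unbalanced distribution $(b,g)\neq(n,n)$; the induction resolves this by showing the ``interface-at-$0$'' property is preserved from one iteration to the next when starting from the two-color initial condition, so $\epsilon=0$ never occurs.
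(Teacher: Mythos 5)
Your argument is correct and rests on the same core mechanism as the paper's proof --- balancing the colors of the left and right edges of the cut pieces --- but you formalize several steps that the paper only asserts, and you reach ``$L$ odd'' by a genuinely different route. The paper argues locally: a new interface requires joining a right edge to an opposite-colored left edge; cuts at existing interfaces ``cannot lead to a net increase in interfaces''; an unequal split of cuts between black and gray segments leaves ``an imbalance'' of edges; and then $b=g$ together with $b+g=L-1$ forces $L$ odd. You instead get $L$ odd from the parity of the cyclic segment count (with two alternating colors on a periodic line, $s_N$ is always even, so the increment $L-1$ must be even), and you replace the local assertion about cuts at interfaces with the global identity $s_{N+1}=s_N-e-\epsilon+J$ with $J\le L$, which, once $\epsilon=1$ is established, cleanly forces $e=0$ and $J=L$ rather than merely making it plausible. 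Your edge tally $2b+1=L$ is the same balance the paper invokes (black right edges matching gray left edges), just written as an explicit equation, and your induction on the ``interface at position $0$'' property supplies the iteration-to-iteration persistence that the paper's proof leaves implicit (and which, in the paper's overall logic, is partly delegated to \cref{prop:2}). The trade-off: your version is more rigorous and would survive scrutiny of the $\epsilon=0$ and cut-on-interface edge cases without appeal to intuition, at the cost of extra bookkeeping; the paper's version is shorter and reads more transparently but asks the reader to accept two local claims on faith.
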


\begin{proof}
A new black-gray interface can only be created when the right edge of a black cut piece joins with the left edge of a gray cut piece, and vice versa for gray-black interfaces. Therefore, the maximal number of new segments, $L-1$, can only be created at each iteration if the number of black right edges matches the number of gray left edges, and the number of gray right edges matches the number of black left edges. Otherwise, right and left edges of the same color would have to join, meaning a new interface would not be created. Furthermore, cuts must not occur at existing interfaces, because that cannot lead to a net increase in interfaces. This means cuts must occur within colored segments, and for each black/gray right edge there must also be a left edge with the same color. To create $L-1$ new interfaces, the number of cuts within black segments must match the number of cuts within gray segments at each iteration. Otherwise, there is an imbalance of either black or gray edges. When $L$ is odd ($L-1$ cut locations), this can be achieved by having $(L-1)/2$ cuts in black segments, and $(L-1)/2$ cuts in gray segments. However, when $L$ is even, there must either be more cuts within black segments or more cuts within gray segments, resulting in an imbalance in the number of black edges and gray edges.
\end{proof}

We can now return to the proof of \cref{prop:valid_perms}. From the previous three propositions we can restrict our attention to the first iteration of variable IETs $T_{L,\pi}$, such that $\pi(1)=1$, $L=2n+1$ is odd, and cuts $c_{1j}$ are located such that $n$ are in the black segment and $n$ are in the gray segment. After cutting (but before rearranging) the left-right initial condition, there are $2n+1$ cut pieces, call them $p_1,\dots,p_{2n+1}$. The first $n$ cut pieces, $p_1,\dots, p_{n}$, are black, and we label them $b_1,\dots, b_{n}$, i.e.\ $b_i=p_i$. The final $n$ cut pieces, $p_{n+2},\dots,p_{2n+1}$, are gray, and we label them $g_1,\dots, g_{n}$, i.e.\ $g_i=p_{i+n+1}$. There is also one middle piece, $p_{n+1}$, that is black on the left and gray on the right. Since $\pi(1)=1$, after rearrangement by $\pi$, the first piece remains in the first position. For optimal mixing, the next piece must be gray, so it can be any one of the $g_i$. Ignoring the black-gray piece, $p_{n+1}$, for now, the next piece must be black, i.e.\ one of the $b_i$, and we simply alternate between black and gray pieces, to get a sequence
\begin{equation}
b_1 g_{\pi_2^*(1)} b_{\pi_1(2)} g_{\pi_2^*(2)} \dots b_{\pi_1(n)} g_{\pi_2^*(n)},
\end{equation}
where $\pi_1$ is a permutation of $\{2,\dots,n\}$, and $\pi_2^*$ is a permutation of $\{1,\dots,n\}$, representing the permutations of the black and gray pieces respectively. Since $b_i=p_i$ and $g_i=p_{i+n+1}$, this is equivalent to the sequence
\begin{multline}
p_1 p_{\pi_2^*(1)+n+1} p_{\pi_1(2)} p_{\pi_2^*(2)+n+1} \dots p_{\pi_1(n)} p_{\pi_2^*(n)+n+1} \\
 =  p_1 p_{\pi_2(n+2)} p_{\pi_1(2)} p_{\pi_2(n+3)} \dots p_{\pi_1(n)} p_{\pi_2(2n+1)},
\end{multline}
where $\pi_2(i) = \pi_2^*(i-n-1)+n+1$ is the permutation of $\{n+2,\dots,2n+1\}$ obtained by conjugating $\pi_2^*$ with the shift $i\mapsto i+n+1$. Returning to the black-gray piece, $p_{n+1}$, we can insert it immediately after any of the gray pieces, i.e.\ those of the form $p_{\pi_2(i)}$, to get $L-1$ new segments. In such a case, the permutation of the pieces, i.e.\ the sequence of indices, is of the form 
\begin{equation} 
\pi=1\, \pi_2(n+2) \left[ \prod_{i=2}^k \pi_1(i) \pi_2(i+n+1) \right] (n+1) \left[\prod_{i=k+1}^n \pi_1(i) \pi_2(i+n+1) \right],
\end{equation}
where $1\leq k \leq n$ indicates which gray piece the black-gray piece follows. This proves \cref{prop:valid_perms}.


Note that some reducible permutations, such as $132$, can produce optimal mixing in the two-color variable case [\cref{fig:segment_rescaling}(b)], but they cannot even produce weak mixing in the fixed case, as they do not satisfy the Keane minimality condition \cite{Keane1975}. 


Now that we know which permutations can produce the maximal number of new colored segments, the question is where the cuts need to be located within the black and gray segments to achieve optimal mixing, such that each segment in the final state has the same length. Consider the case $L=3$ and $\pi = 132$; at each iteration we cut the first black segment in half and the first gray segment in half at cut locations $c_{Nj}$, as shown in \cref{fig:segment_rescaling}(a). By \cref{prop:valid_perms} and the proof of \cref{prop:2}, the maximum number, $L-1=2$, new segments will be created at each iteration. However, the colored segments will not have equal lengths, so the protocol does not mix optimally. This can be overcome by rescaling the segments after a desired number of iterations so that they all have the same length, and then iterating backward to find where the cuts need to be located. In this case, the rescaling occurs between the bottom rows of \cref{fig:segment_rescaling}(a,b), and by iterating backward (upward) in \cref{fig:segment_rescaling}(b), the optimal cut locations $c_{Nj}'$ are found, such that $\Phi=1$ at $N=2$. This rescaling procedure can be repeated for any variable IET, and so cut locations that achieve $\Phi=1$ can be found for any variable IET $T_{L,\pi}$ that satisfies \cref{prop:valid_perms}.

\begin{figure}[tbp]
\centering
\includegraphics[width=\columnwidth]{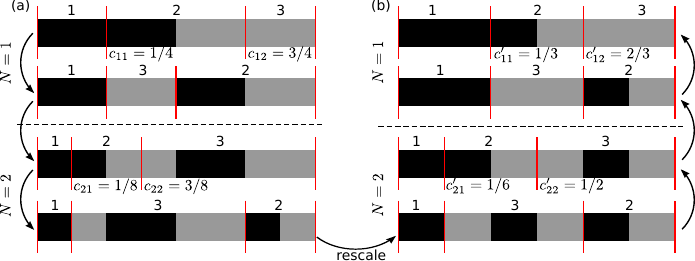}
\caption{Finding optimal cut locations for the two-color initial condition and variable IET with permutation $132$. Red lines indicate cut locations. (a)~Cutting at the midpoint of the first black and gray segments at each iteration yields the maximum number of segments, $2N+2$, but for $N\geq 2$ segment lengths are unequal. Cut locations are denoted $c_{Nj}$, where $1\leq j \leq L-1$ is the location of the $j$-th cut in the $N$-th iteration. Optimal cut locations can be found by rescaling the bottom row of (a) so that all segments have the same length, as shown in the bottom row of (b), then iterating backward (upward) using the same sequence of cuts as in (a) to find the optimal cut locations, $c_{Nj}'$.}
\label{fig:segment_rescaling}
\end{figure}

Cutting the first black segment and first gray segment at each iteration, as in \cref{fig:segment_rescaling}, gives one optimal mixing protocol, but there are many more. For instance, before cutting at $N=2$, there are two black segments and two gray segments. Instead of performing the cuts in the first black and first gray segments, we can also achieve optimal mixing by cutting in the first black segment and in the second gray segment [\cref{fig:matching_edges}(c)], or in the second black segment and in the second gray segment [\cref{fig:matching_edges}(d)]. As long as the first cut is in a black segment and the second cut is in a gray segment to its right, then optimal mixing will be achieved. We code the different optima as pairs $(i,j)$, where $i$ represents which black segment is cut (e.g.\ $i=1$ if the first black segment is cut) and $j$ represents which gray segment is cut (e.g.\ $j=2$ if the second gray segment is cut). At the $N$-th iteration, there are $N$ black segments and $N$ gray segments, so $1\leq i \leq j \leq N$. Hence, at the $N$-th iteration, there are $\left(\genfrac{}{}{0pt}{}{N+1}{2}\right)$ distinct pairs of optimal cut locations, where $\left(\genfrac{}{}{0pt}{}{a}{b}\right)$ denotes the binomial coefficient. Over the full $N$ iterations, the total number of distinct sets of optimal cut locations equals
\begin{equation} \label{eq:number_cut_locations}
\prod_{i=1}^N \left(\genfrac{}{}{0pt}{}{N+1}{2}\right)  = \frac{\left(N! \right)^2 \left( N+1 \right)}{2^N} .
\end{equation}

This same approach can be used to find optimal mixing protocols for arbitrary numbers of iterations, $N$, and cut pieces, $L=2n+1$. At the $N$-th iteration, there are $(N-1)(L-1)+2=2[n(N-1)+1]$ total segments, half black and half gray. Assuming the first segment is always black (or re-coloring as needed), and that the first $n$ cuts are always within black segments, and the final $n$ cuts are within gray segments, then by the proof of \cref{prop:2}, optimal mixing ($\Phi=1$) is guaranteed (after rescaling segment lengths). Like in the case $L=3$ above, we code the optimal cut locations by $2n$-tuples $(i_1,\dots,i_n,j_1,\dots,j_n)$, where the $i$'s represent which black segments are cut (e.g., $i_3=2$ means the third cut occurs in the second black segment), and the $j$'s represent which gray segments are cut (e.g., $j_5=3$ means the fifth gray segment cut occurs in the third gray segment). Since the cuts in black segments must come first, the $i$'s and $j$'s satisfy $1\leq i_1 \leq \dots\leq i_n \leq j_1 \leq \dots \leq j_n \leq n(N-1)+1$, and hence there are $\left(\genfrac{}{}{0pt}{}{n(N+1)}{2n}\right)$ possibilities. However, in some cases there are even more optima, because it is not always necessary to perform the first $n$ cuts within black segments and the final $n$ cuts within gray segments. For example, for variable IETs using the permutation $14325$, it can be shown that after the first iteration we can achieve optimal mixing by locating the cuts in the order black, black, gray, gray (BBGG) as described above; or the orders BGGB, GBBG, and GGBB. The orders BGBG and GBGB cannot produce optimal mixing (see \cref{app:color_order} for full details). In the case $L=3$ with permutation $132$, the first cut must always be in a black segment and the second in a gray segment. Hence, \cref{eq:number_cut_locations} accounts for all possibilities (\cref{app:color_order}).

\subsection{Extension to more than two colors} \label{sec:multi-color_extension}

What is important is that we have a formulaic approach to find variable IETs that produce optimal mixing. Even though most of the results in this section use a two-color initial condition, many of the ideas can be extended to initial conditions with more colors. The main challenge when extending to more than two colors is that the metrics $U$ and $D$ are not equivalent. Optimal mixing is not simply a matter of producing the maximal number of segments like in the two-color case; the ordering of the colored segments is equally important to minimize $D$. Therefore, it is more difficult to find necessary conditions for mixing to be optimal. However, it is relatively easy to find permutations and cut locations that produce optimal mixing for more than two colors. We follow essentially the same construction as the two-color case. For $k$ colors, $C_1,\dots,C_k$, $L-1$ must be a multiple of $k$ in order to produce the same number of each colored segment at each iteration (equivalent to the condition that $L$ must be odd in \cref{prop:odd_L_cuts_inside_segments}). In the first iteration, we make $(L-1)/k$ cuts within each of the $k$ segments (like \cref{prop:odd_L_cuts_inside_segments}). For the permutation $\pi$, we choose $\pi(1)=1$ (\cref{prop:rotations} is valid for any number of colors), so that after rearranging, the first colored segment is still color $C_1$. Then, through trial-and-error or solving a system of linear equations with the colors of the left and right edges of cut pieces as variables (see \cref{app:color_order}), we find the permutations that yield the sequence $C_1,\dots,C_k$ repeated $1+(L-1)/k$ times, i.e.\ those permutations that yield $(L-1)/k$ new sequences of the colors. This is equivalent to repeating the black-gray pair for the two-color case. In successive iterations, as long as cut segments have the same colors as in the first iteration (i.e.\ the first $(L-1)/k$ cuts are within segments with color $C_1$, the next $(L-1)/k$ cuts are within segments with color $C_2$, and so on), then the IET will yield the sequence $C_1,\dots,C_k$ repeated $1+N(L-1)/k$ times, where $N$ is the number of iterations. We then use the same rescaling process demonstrated in \cref{fig:segment_rescaling} to find cut locations such that all the segments have the same length after a desired number of iterations, which means the IET mixes optimally, i.e.\ $\Phi=1$. 

Consider the three-color initial condition ($k=3$) in \cref{fig:segment_rescaling_3_color}. To produce optimal mixing, $L-1$ must be a multiple of $3$. For $L=4$, one cut is made within each of the black, gray and white segments, as shown in the top row of \cref{fig:segment_rescaling_3_color}(a). We assume that $\pi(1)=1$, so that after rearrangement by $\pi$, the first piece, $p_1$, does not move. Since $p_1$ has a black right edge, to obtain the sequence black-gray-white repeated $1+(L-1)/k=2$ times, the next piece must have a gray left edge, meaning it must be $p_3$. Now, the right edge of $p_3$ is white, so the next piece must have a black left edge, meaning it must be $p_2$. Lastly, the fourth piece, $p_4$, remains in place. Therefore, the permutation $\pi = 1324$ yields two repeating black-gray-white sequences. In subsequent iterations, the sequence black-gray-white will be repeated $N+1$ times as long as the first cut is within a black segment, the second cut is within a gray segment, and the third cut is within a white segment. This is demonstrated in \cref{fig:segment_rescaling_3_color}(a), where the cuts are made at the midpoints of the first black, gray and white segments. However, for $N=2$ in \cref{fig:segment_rescaling_3_color}(a), mixing is suboptimal because the segment lengths are unequal. Optimal cut locations are found using the same rescaling process demonstrated in \cref{fig:segment_rescaling}, i.e.\ segments in the bottom row of \cref{fig:segment_rescaling_3_color}(a) are rescaled to obtain the bottom row of \cref{fig:segment_rescaling_3_color}(b), then the IET is iterated backward.

\begin{figure}[tbp]
\centering
\includegraphics[width=\columnwidth]{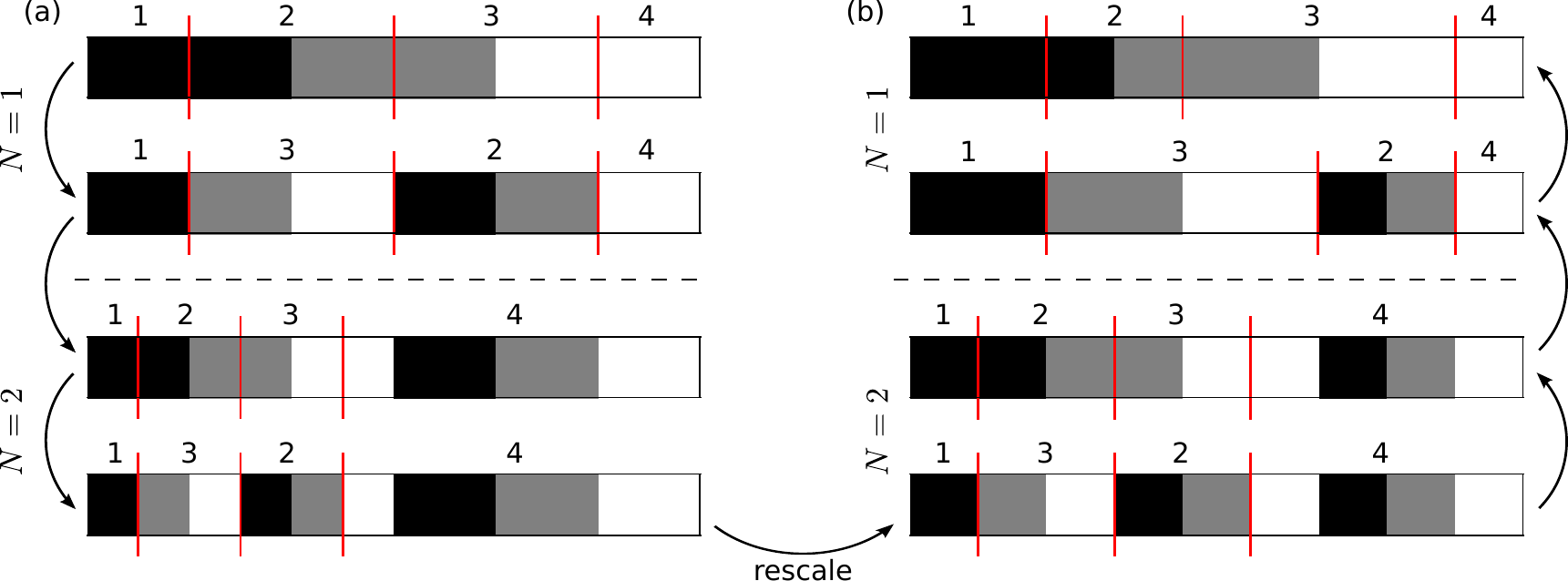}
\caption{Finding optimal cut locations for the three-color initial condition and variable IET with permutation $1324$. Red lines indicate cut locations. (a)~Cutting at the midpoint of the first black, gray and white segments at each iteration yields the maximum number of segments, $N(L-1)+k=3N+3$, but for $N\geq 2$ segment lengths are unequal. Optimal cut locations can be found by rescaling the bottom row of (a) so that all segments have the same length, as shown in the bottom row of (b), then iterating backward (upward) using the same sequence of cuts as in (a) to find the optimal cut locations.}
\label{fig:segment_rescaling_3_color}
\end{figure}

The above approach provides a relatively easy way to find permutations and cut locations that mix optimally for any number of colors. In fact, the permutations found using this approach are the \emph{only} permutations that can mix optimally. The only permutations that this method would not capture are those that permute the repeating sequence of colors, for instance changing from repeating black-gray-white to repeating black-white-gray. Consider the first iteration, $N=1$, in \cref{fig:segment_rescaling_3_color}(a), and suppose that the repeating sequence of colors changed from black-gray-white to black-white-gray. Since the piece $p_1$ has a black left edge, after rearrangement it must be preceded by a piece with a gray right edge, meaning it must be preceded by $p_2$. However, $p_2$ followed by $p_1$ results in the sequence black-gray-black, and so colors cannot be evenly distributed along the line (each pair of black segments must have a gray segment and a white segment between them). Therefore, for the case with three colors and $L=4$, the repeating sequence of colors must remain black-gray-white.

More generally, since cuts must occur within the colored segments, in the first iteration there must be $k-1$ cut pieces that each have exactly two colors, $C_i$ on the left and $C_{i+1}$ on the right, for $i=1,\dots,k-1$. For example, with three colors (black-gray-white), there must be a black-gray piece and a gray-white piece ($p_2$ and $p_3$ at each iteration in \cref{fig:segment_rescaling_3_color}). For an optimally mixing permutation, after the first iteration (and every iteration) each set of $k$ colored segments must have one of each color (evenly distributed colors), and so the same sequence of colors $C_{a_1},\dots,C_{a_k}$ must repeat itself, where $a_1,\dots,a_k$ is a permutation of $1,\dots,k$. Assuming $a_1=1$ (we can always start with the first color in the initial condition), there is a cut piece with two colors, $C_1$ on the left and $C_2$ on the right, so $C_2$ must always follow $C_1$, and $a_2=2$. Similarly, there is a cut piece with $C_2$ on the left and $C_3$ on the right, so $C_3$ must always follow $C_2$, and $a_3=3$. Continuing this process for all the pieces with two colors yields $a_i=i$ for all $i=1,\dots,k$, i.e.\ the ordering of the colors cannot change, and so our method captures \emph{all} optimally mixing permutations.

Now that we know how to find optimally mixing variable IETs, the question is: How much better is the mixing they produce compared to fixed IETs?

%
%
%

\section{Variable vs. fixed cutting-and-shuffling} \label{sec:time_dependent-vs-time-periodic}

Intuitively, the added parametric freedom of variable protocols should enable significantly improved mixing. In this section we compare mixing produced by optimal variable IETs, like those discussed in the previous section, with mixing produced by fixed IETs with random cut locations and fixed IETs with optimally chosen cut locations. Considering two, three, and four colors in the initial condition, we show that in all cases optimal variable IETs produce significantly better mixing than random fixed IETs and that the degree of improvement increases with the number of iterations, $N$. Furthermore, with more colors in the initial condition, optimal variable IETs improve mixing more than both random and optimal fixed IETs.

First, consider mixing the two-color initial condition in the top row of the space-time plot in \cref{fig:segment_rescaling}. For permutations with $L=3$, from \cref{prop:valid_perms} there are only three permutations that can achieve optimal mixing in the variable case, $132$, $321$, and $213$. The permutations $132$ and $213$ are both reducible, and hence do not mix in the fixed case (the IET is periodic with period equal to two). On the other hand, the permutation $321$ is irreducible, and hence can at least achieve weak-mixing in the fixed case (when the two cut locations are chosen to satisfy the Keane minimality condition). However, weak-mixing only guarantees mixing over infinite iterations, and we are more interested in optimizing mixing over finite numbers of iterations. For fixed IETs with permutation $321$, we calculate $\Phi$ across the cut location parameter space, $0<c_1<c_2<1$, (sampling on a grid with a spacing of $5\times 10^{-3}$ in each direction) The results are shown in \cref{fig:mix_metric_distros} for $N=2$, $4$, $6$, and $8$ iterations, with the darkest color intensity indicating the value of the mixing metric $\Phi$ closest to the optimum of $\Phi=1$. The average value of $\Phi$, $\Phi_{\text{ave}}$, across the $(c_1,c_2)$ parameter space is the expected, or ``typical,'' degree of mixing for randomly chosen fixed cut locations. As the number of iterations $N$ increases, $\Phi_{\text{ave}}$ grows approximately linearly, shown by the dotted black curve in \cref{fig:periodic_metric_comparison}(a). This means that on average, the mixing quality, compared to the variable optimum ($\Phi=1$), becomes worse as the number of iterations increases. To show a typical space-time plot for an average fixed IET, we arbitrarily select a particular IET such that $\Phi \approx \Phi_{\text{ave}}$ for each $N$, which is shown in the first column of \cref{fig:periodic_optima_2_colors}. As $N$ increases, there is a small improvement in mixing, i.e.\ there are more black and gray segments, and the length of the longest colored segment, $U$, decreases. However, the colored segments vary substantially in their lengths. Therefore, for the average ``typical'' fixed IET, the mixing metric $\Phi$, which is normalized by the optimal variable IET, increases rapidly as $N$ increases. In contrast, for the optimal variable IETs (third column of \cref{fig:periodic_optima_2_colors}), the number of segments grows faster [as $N(L-1)+k=2N+2$], and $U$ decreases more rapidly [as $(2N+2)^{-1}$], indicating all segments have uniform length.

Now we compare optimal variable IETs to optimal fixed IETs, again with the two-color initial condition and $L=3$. Optimal fixed IETs are found for each $N$ by progressively refining the sample grid in the $(c_1,c_2)$ parameter space around a minimum value of $\Phi$, $\Phi_{\text{min}}$, shown as green `$\times$'s in \cref{fig:mix_metric_distros}.\footnote{Due to symmetry of the parameters about the line $c_2=1-c_1$ there are two minima. We find the one such that $c_2<1-c_1$.} For the two-color initial condition, in contrast to the average, $\Phi_{\text{min}}$ remains close to $1$ [dotted black curve in \cref{fig:periodic_metric_comparison}(b)], indicating that the optimal fixed IET mixes almost as well as the optimal variable IET. This is demonstrated in the second column of \cref{fig:periodic_optima_2_colors}. The optimal fixed IETs (second column) yield the maximum number of segments, $N(L-1)+k=2N+2$, and the segments are relatively uniform, with the uniformity of the segments improving with $N$. As a result, $\Phi$ approaches $1$ as $N$ increases.


\begin{figure}[tbp]
\centering
\includegraphics[width=\columnwidth]{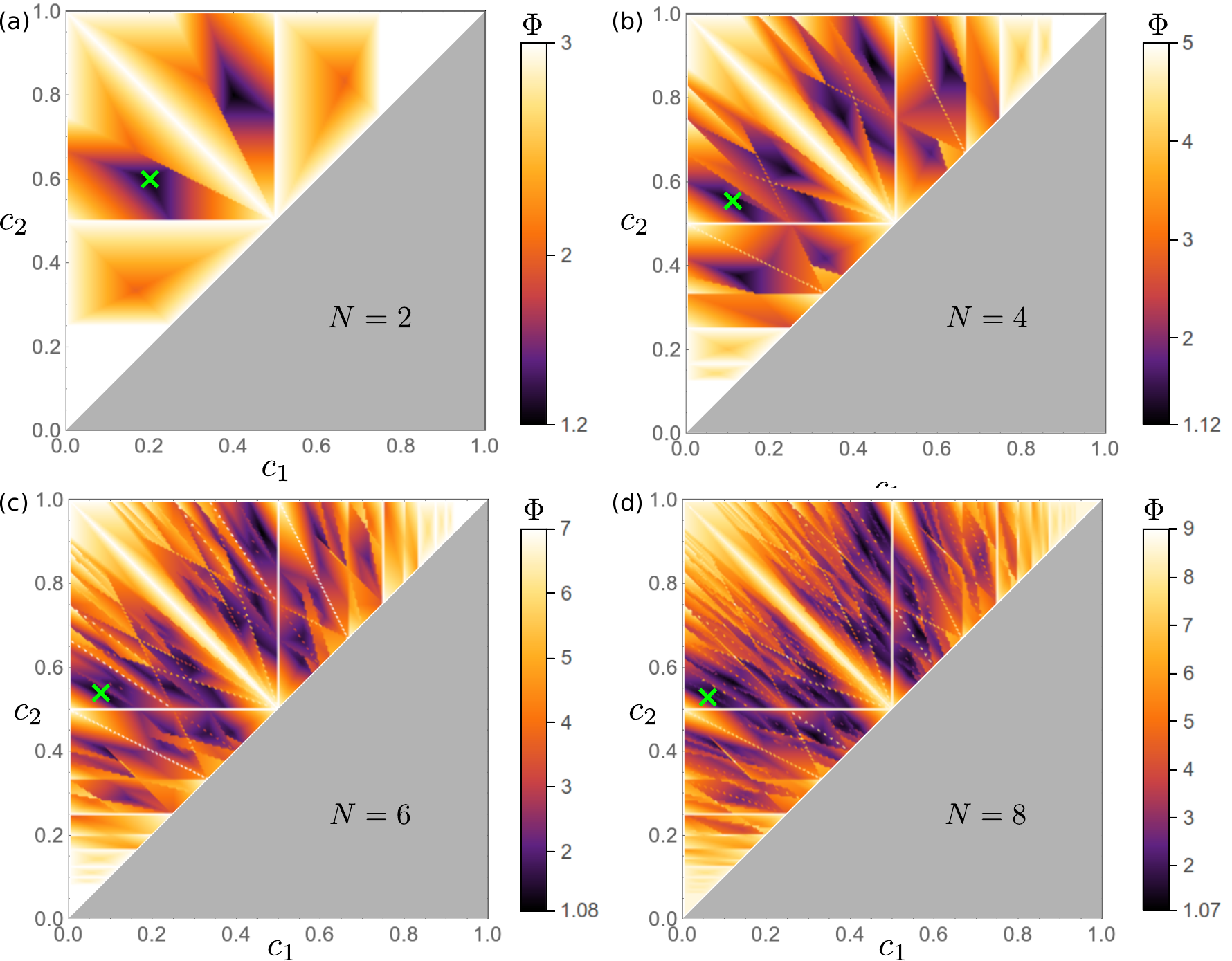}
\caption{Mixing metric $\Phi$ across the cut location parameter space $0<c_1<c_2<1$ for fixed IETs using the permutation $321$ and the two-color initial condition shown in the top rows of \cref{fig:segment_rescaling}. Cut locations with minimum $\Phi$ (optimal mixing) are marked by a green `$\times$'. Due to symmetry through the line $c_2=1-c_1$, there is also a second minimum (not indicated). Note that each plot has a different range for $\Phi$, spanning the extremes of $\Phi$.}
\label{fig:mix_metric_distros}
\end{figure}

\begin{figure}[tbp]
\centering
\includegraphics[width=0.55\columnwidth]{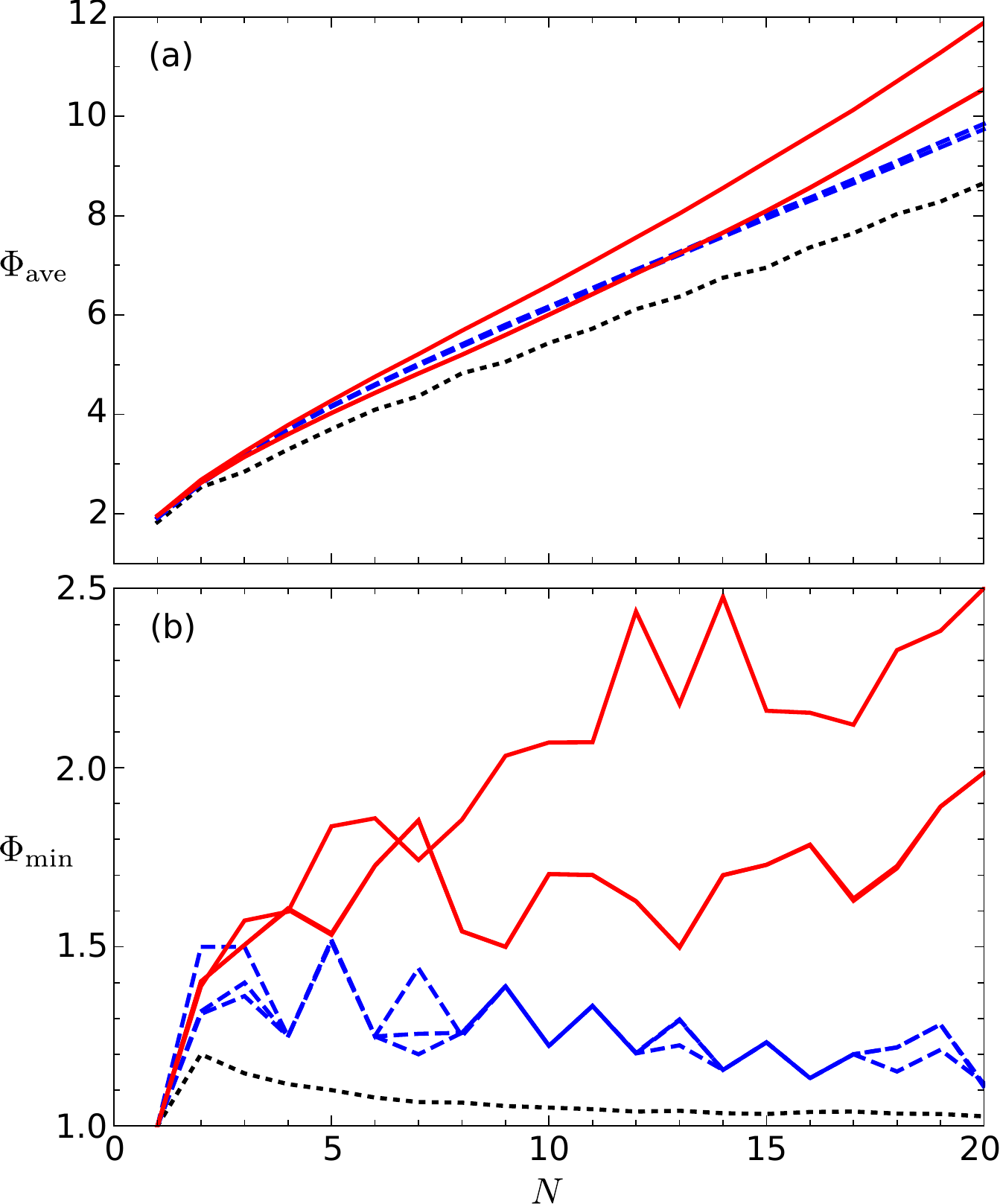}
\caption{(a)~Average and (b)~minimum of the mixing metric $\Phi$ across the $0<c_1<\dots<c_{L-1}<1$ parameter space for fixed IETs with different irreducible permutations and initial conditions. Dotted black: two color initial condition and the $321$ permutation. Dashed blue: three color initial condition, and the permutations $3241$, $2413$, and $4132$ (in (a) the curves all overlap, and in (b) the curves overlap in several regions). Red: four color initial condition and the permutations $52413$, $35241$, and $41352$ (the curves for the first two permutations are the same due to a reflection symmetry). Optimal variable IETs have $\Phi=1$.}
\label{fig:periodic_metric_comparison}
\end{figure}



\begin{figure}[tbp]
\centering
\includegraphics[width=\columnwidth]{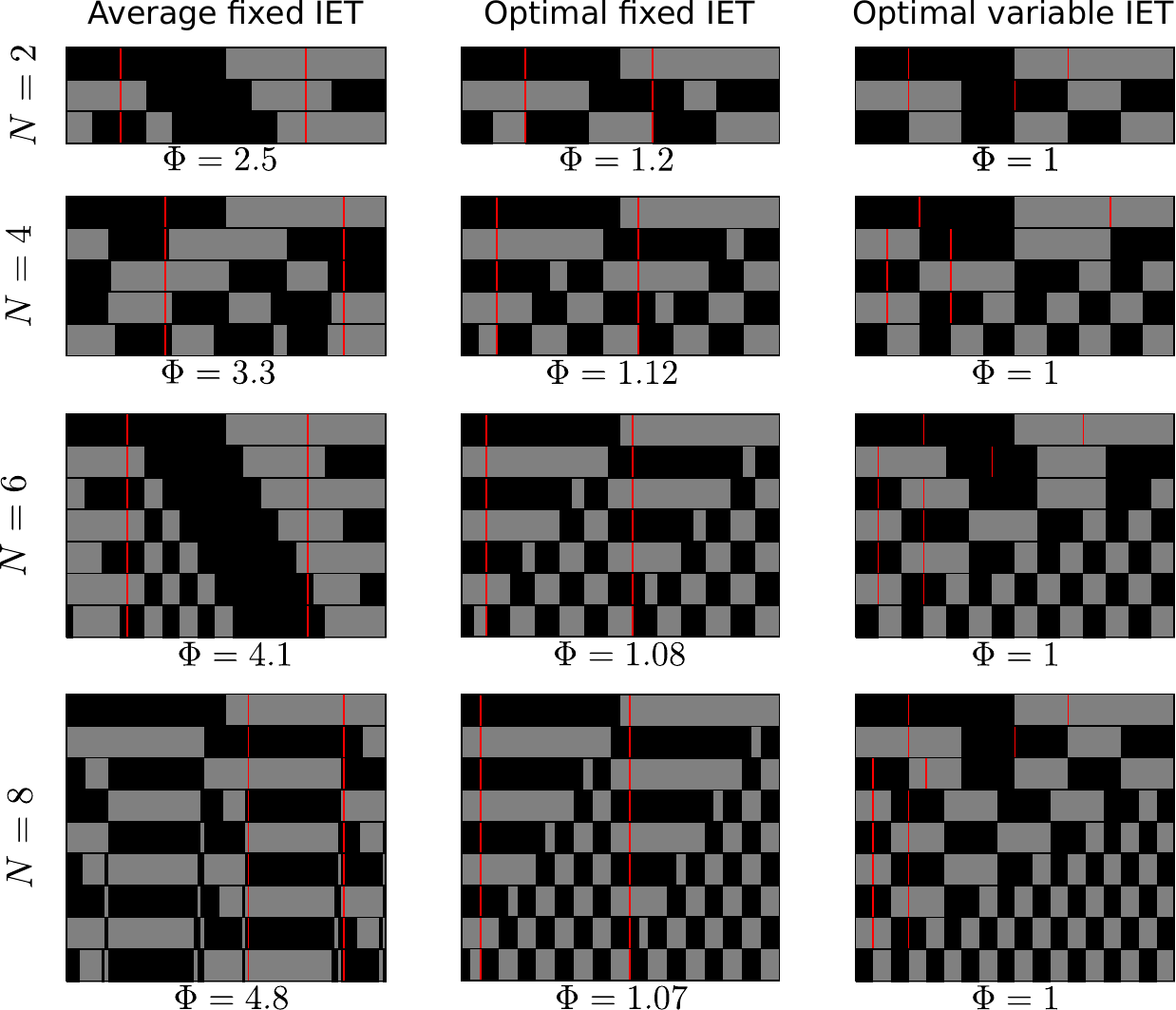}
\caption{Space-time plots for IETs with the two-color initial condition using the permutation $\pi=321$ for $N=2$, $4$, $6$, and $8$. Cut locations are shown by red lines. First column: fixed IETs with $\Phi\approx\Phi_{\text{ave}}$. Second column: optimal fixed IETs with $\Phi=\Phi_{\text{min}}$. Cut locations correspond to the green $\times$'s in \cref{fig:mix_metric_distros}. Third column: optimal variable IETs ($\Phi=1$) obtained using the methods described in \cref{sec:analytic_optima}.}
\label{fig:periodic_optima_2_colors}
\end{figure}

We perform similar analyses for three-color and four-color initial conditions. As discussed in \cref{sec:multi-color_extension}, for the three-color initial condition, $L-1$ must be a multiple of three to achieve optimal mixing. Thus, $L=4$ is the minimum permutation length. In this case, the only permutations that can achieve optimal mixing are the rotations of $1324$, of these three are irreducible, namely $3241$, $2413$, and $4132$. Similarly, for the four-color initial condition, $L-1$ must be a multiple of four to achieve optimal mixing. For $L=5$, the only permutations that can achieve optimal mixing are the rotations of $13524$, of these three are irreducible, namely $35241$, $52413$, and $41352$. For each initial condition, each irreducible permutation, and each $N$, we find the average, $\Phi_{\text{ave}}$, and minimum, $\Phi_{\text{min}}$, values of $\Phi$ for fixed IETs by sampling the parameter space $0<c_1<\dots<c_{L-1}<1$, similar to the approach to generate \cref{fig:mix_metric_distros} for two colors and $L=3$. As with the two-color initial condition, $\Phi_{\text{ave}}$ grows approximately linearly as $N$ increases for both the three-color [dashed blue curves in \cref{fig:periodic_metric_comparison}(a), all three curves overlap] and four-color [solid red curves in \cref{fig:periodic_metric_comparison}(a), two of the three curves overlap due to a reflection symmetry] initial conditions, meaning mixing using random fixed cuts, compared to the variable optimum ($\Phi=1$), becomes progressively worse as $N$ increases. This is demonstrated in the space-time plots for fixed IETs with $\Phi \approx \Phi_{\text{ave}}$ in the first columns of \cref{fig:periodic_optima_3_colors,fig:periodic_optima_4_colors}. As $N$ increases, the number of segments increases, the length of the longest segment decreases, and the colors become more evenly distributed along the line, indicating improved mixing. However, when compared to the optimal variable IETs (third columns of \cref{fig:periodic_optima_3_colors,fig:periodic_optima_4_colors}) it is clear that at each $N$, the average fixed IETs, have fewer segments, the segments are longer, and the colors are not as evenly distributed, leading to large values of $\Phi$. Since the discrepancy in mixing quality between average fixed IETs and optimal variable IETs becomes greater as $N$ increases, $\Phi_{\text{ave}}$ increases with $N$ [\cref{fig:periodic_metric_comparison}(a)].

Comparing $\Phi_{\text{ave}}$ for the different initial conditions, \cref{fig:periodic_metric_comparison}(a) shows that when $N>14$, $\Phi_{\text{ave}}$ is greater when there are more colors in the initial condition (the curves are ordered vertically according to the number of colors in the initial condition). This means that when there are more colors in the initial condition, mixing using random fixed cut locations becomes worse relative to the optimum ($\Phi=1$). The same vertical ordering of curves occurs when considering optimal fixed IETs (those with $\Phi=\Phi_{\text{min}}$) as well, shown in \cref{fig:periodic_metric_comparison}(b). Therefore, when there are more colors in the initial condition, mixing using optimal fixed cut locations also becomes worse relative to the optimum ($\Phi=1$). While the optimal fixed IETs for the two-color initial condition could achieve near optimal values of $\Phi$, the optimal fixed IETs for the three-color and four-color initial conditions perform significantly worse. This is demonstrated by the space-time plots for optimal fixed IETs in the second columns of \cref{fig:periodic_optima_3_colors,fig:periodic_optima_4_colors}. In each case the maximum number of segments is created, but they are not quite equal in length, and, more importantly, the colors are not evenly distributed.

\begin{figure}[tbp]
\centering
\includegraphics[width=\columnwidth]{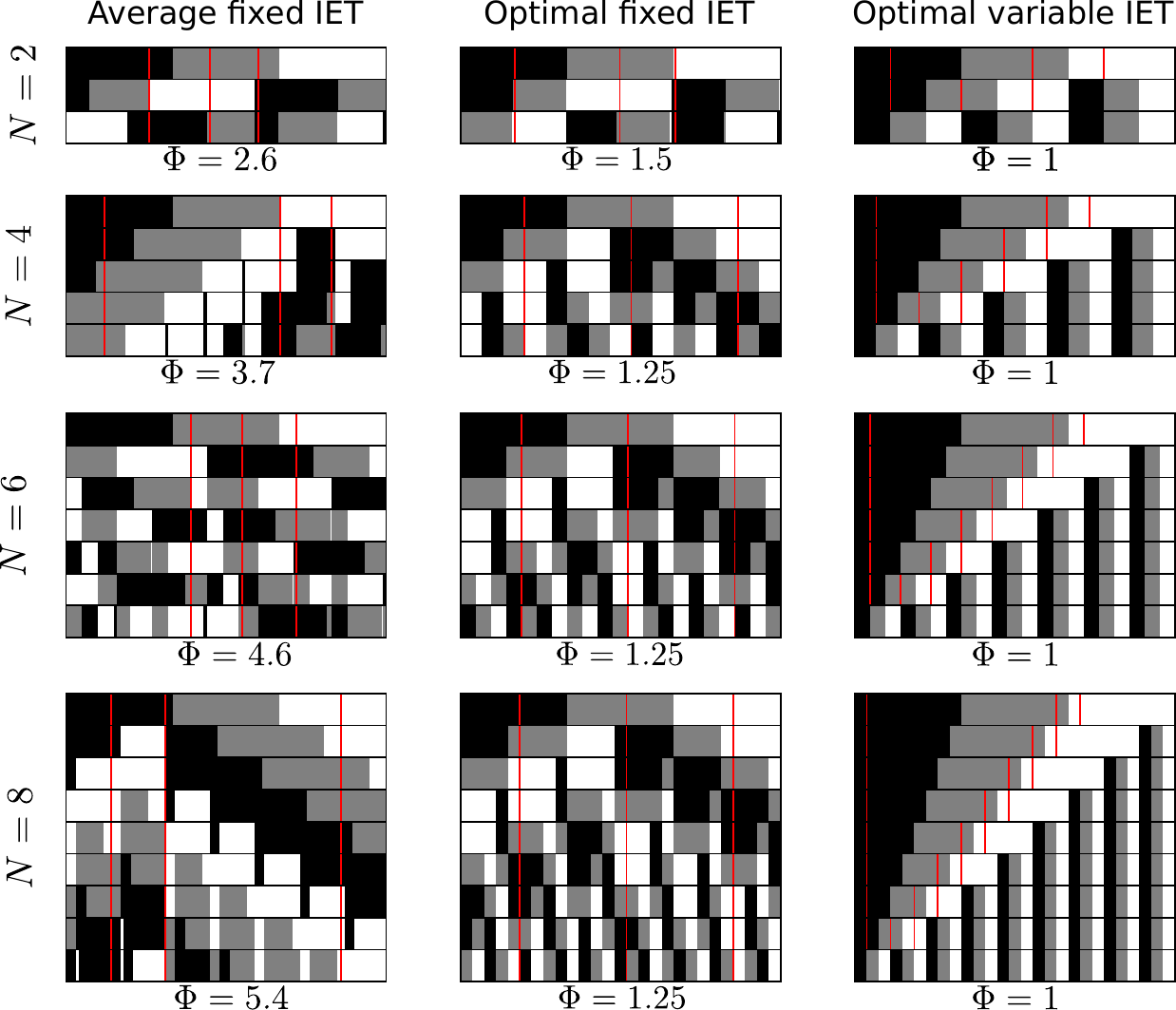}
\caption{Space-time plots for IETs with the three-color initial condition using the permutation $\pi=2413$ for $N=2$, $4$, $6$, and $8$. Cut locations are shown by red lines. First column: fixed IETs with $\Phi\approx\Phi_{\text{ave}}$. Second column: optimal fixed IETs with $\Phi=\Phi_{\text{min}}$. Third column: optimal variable IETs ($\Phi=1$) obtained using the methods described in \cref{sec:multi-color_extension}.}
\label{fig:periodic_optima_3_colors}
\end{figure}


\begin{figure}[tbp]
\centering
\includegraphics[width=\columnwidth]{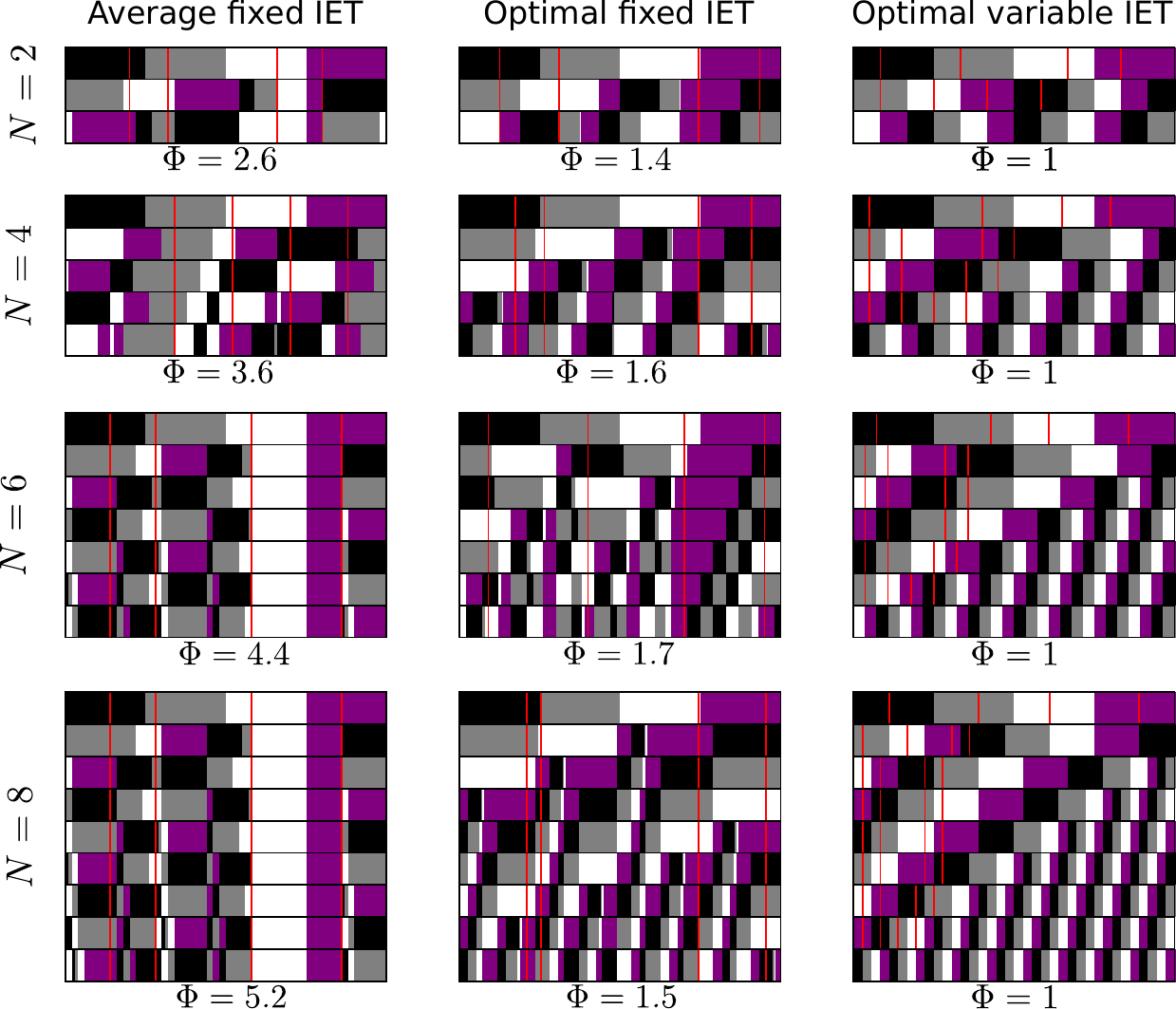}
\caption{Space-time plots for IETs with the four-color initial condition using the permutation $\pi=35241$ for $N=2$, $4$, $6$, and $8$. Cut locations are shown by red lines. First column: fixed IETs with $\Phi\approx\Phi_{\text{ave}}$. Second column: optimal fixed IETs with $\Phi=\Phi_{\text{min}}$. Third column: optimal variable IETs ($\Phi=1$) obtained using the methods described in \cref{sec:multi-color_extension}.
}
\label{fig:periodic_optima_4_colors}
\end{figure}

To summarize, increasing complexity in the initial condition (more colors) results in increased improvement in the mixing quality achieved by optimal variable IETs compared to both random fixed IETs and optimal fixed IETs. A simple explanation for this behavior is that there are more competing interests, i.e.\ competition between equal segment lengths and even distribution of the colors, when there are more colors in the initial condition. With more colors it becomes increasingly difficult to maintain a repeating fixed order of the colored segments without moving the cut locations. Using variable IETs overcomes these limitations in fixed IETs.

\section{Strategies for cutting-and-shuffling over many iterations in general systems} \label{sec:long-time_optimization}

While optima can be found analytically for variable IETs with simple initial conditions for arbitrary numbers of iterations, more complex cutting-and-shuffling systems require computationally expensive numerical optimization methods. For variable systems, this computational expense is compounded because the number of control parameters grows linearly with the number of iterations. In typical mixing problems, it is desired to optimize mixing over a small number of iterations, where numerical optimization methods are feasible. However, there may be situations where it is not possible to reach the desired mixing quality in a small number of iterations, and optimization over the required number of iterations is computationally prohibitive.

Even for fixed protocols, where the number of control parameters does not grow with the number of iterations, the distribution of the mixing metric across the parameter space is likely to be multi-modal, discontinuous, and complex, as demonstrated in \cref{fig:mix_metric_distros}(d) for the relatively simple case with the two-color initial condition, $L=3$, and only 8 iterations. This complexity generally increases with the number of iterations, making finding optima more challenging, as higher resolutions in the initial search grid are required.   

There are other general strategies for optimizing mixing over a large number of iterations that can be used for cutting-and-shuffling systems. The first is to use geometric properties of the piecewise isometry. For IETs, the Keane minimality condition can be used to predict long-term mixing quality: a fixed IET will be ergodic provided the permutation is irreducible and not a rotation, and the lengths of the cut pieces are rationally independent \cite{Keane1975, Viana2006}. Furthermore, fixed IETs satisfying the Keane minimality condition are almost always weak-mixing \cite{Avila2007}, which is a stronger sense of mixing than ergodicity \cite{Sturman2006}. Krotter \emph{et al.} \cite{Krotter2012} extended this work to find conditions for protocols to achieve good mixing in a finite time. These weak-mixing fixed IETs, while not optimal, can achieve good mixing over large numbers of iterations, i.e.\ in the limit of infinitely many iterations the domain will be completely homogenized. 

To explore this further, we use the same IET formulation as Krotter \emph{et al.}, which is demonstrated in \cref{fig:IET}. An irreducible permutation $\pi$ is chosen that is not a rotation, and a ratio of successive cut piece lengths, $r=|\mathcal{I}_{i+1}|/|\mathcal{I}_i|$, is chosen that determines the lengths of the cut pieces. For example, in \cref{fig:IET} the irreducible permutation $\pi=3142$ is used in all three examples, $r=1.5$ is used in \cref{fig:IET}(a,b), and $r=1+1/(2\pi)$ is used in \cref{fig:IET}(c). Here we examine this fixed IET approach over a large number of iterations for the two-color initial condition with a simpler irreducible permutation $\pi = 321$. We compare ratios $r=1+ 1/(2^i \pi)$ for $i=-1,0,1,2$, such that $r$ is irrational, and so the IET satisfies the Keane minimality condition. The dependence of the mixing metric $\Phi$ on $N$ for these cases is shown in \cref{fig:ad_hoc-vs-weak_mixing} (note the logarithmic scale of the vertical axis). Some of these examples produce relatively good mixing, with $\Phi =3.0$ at $N=100$, but for others, $\Phi$ is quite large, indicating poor mixing compared to the variable optimum ($\Phi=1$).

A similar strategy can be employed for 2D fixed PWIs using the exceptional set (where cuts occur), which is an intrinsic structure associated with 2D fixed PWIs. It has been shown that the area of the exceptional set can be used to predict the long-term mixing quality produced by 2D fixed PWIs \cite{Park2017}. Hence, long-term mixing can be improved by finding protocols that maximize the area of the exceptional set \cite{Smith2017BSTresonances}. As for IETs, over finitely many iterations this approach would likely yield poor mixing compared to the variable optimum.

Another strategy for mixing over large numbers of iterations is to optimize over short time-horizons, i.e.\ optimizing for every $m$ iterations, where $m$ is much smaller than the total number of iterations $N$. This strategy has proven effective for optimizing mixing in time-dependent fluid flows \cite{Cortelezzi2008}. In many practical applications, the difference between the optima obtained from short time-horizon optimization and the global optimum is likely to be insignificant. 

To demonstrate the effectiveness of short time-horizon optimization, we consider variable IETs with a time-horizon $m=1$, i.e.\ we optimize mixing at each iteration separately. The result is the \emph{ad hoc} method described briefly at the end of \cref{sec:metrics}, where the longest segments of each distinct color are cut in half at each iteration. For example, in the two-color case with permutation $132$ (two cuts within the domain), the longest black and longest gray segments are cut in half at each iteration, as shown in \cref{fig:ad_hoc_mixing}. If there are multiple segments that all have the longest length, then we can arbitrarily choose which one to cut. In \cref{fig:ad_hoc_mixing}, the cut is made in the first maximal black segment and in the first maximal gray segment when there are multiple segments that share the longest length. This \emph{ad hoc} approach is computationally inexpensive, and has a number of advantages compared to fixed IETs. First, this approach achieves optimal mixing ($\Phi=1$) whenever $N=2^i-1$, corresponding to points in \cref{fig:ad_hoc-vs-weak_mixing} where the dashed black curve touches the horizontal axis. Also, the worst mixing (maxima of $\Phi$) occurs at the iterations $N=2^i-2$, and it can be shown that at these iterates $\Phi=2-1/2^{i-1}$, so $\Phi<2$ for all $N$. Thus, the \emph{ad hoc} approach mixes significantly better than the weak-mixing protocols considered in \cref{fig:ad_hoc-vs-weak_mixing}. The key is that for fixed IETs, even weak-mixing ones, segments of the same color frequently reassemble. This means that the number of segments is generally significantly lower than the maximum, $N(L-1)+k$, that can be achieved by optimal variable IETs. On the other hand, reassembly of same-colored segments never occurs using the \emph{ad hoc} method, so the maximal number of segments is always achieved. The only limitation is that the segments do not have equal lengths.

Another advantage of the \emph{ad hoc} method, compared to both fixed IETs and optimal variable IETs, is that it is adaptive, i.e.\ it accounts for the current state of the scalar field. This means that if cuts are imprecise, as has been considered for fixed IETs like those in \cref{fig:IET} \cite{Yu2016}, the method can correct itself and not compound the error.

\begin{figure}[tbp]
\centering
\includegraphics[width=0.7\columnwidth]{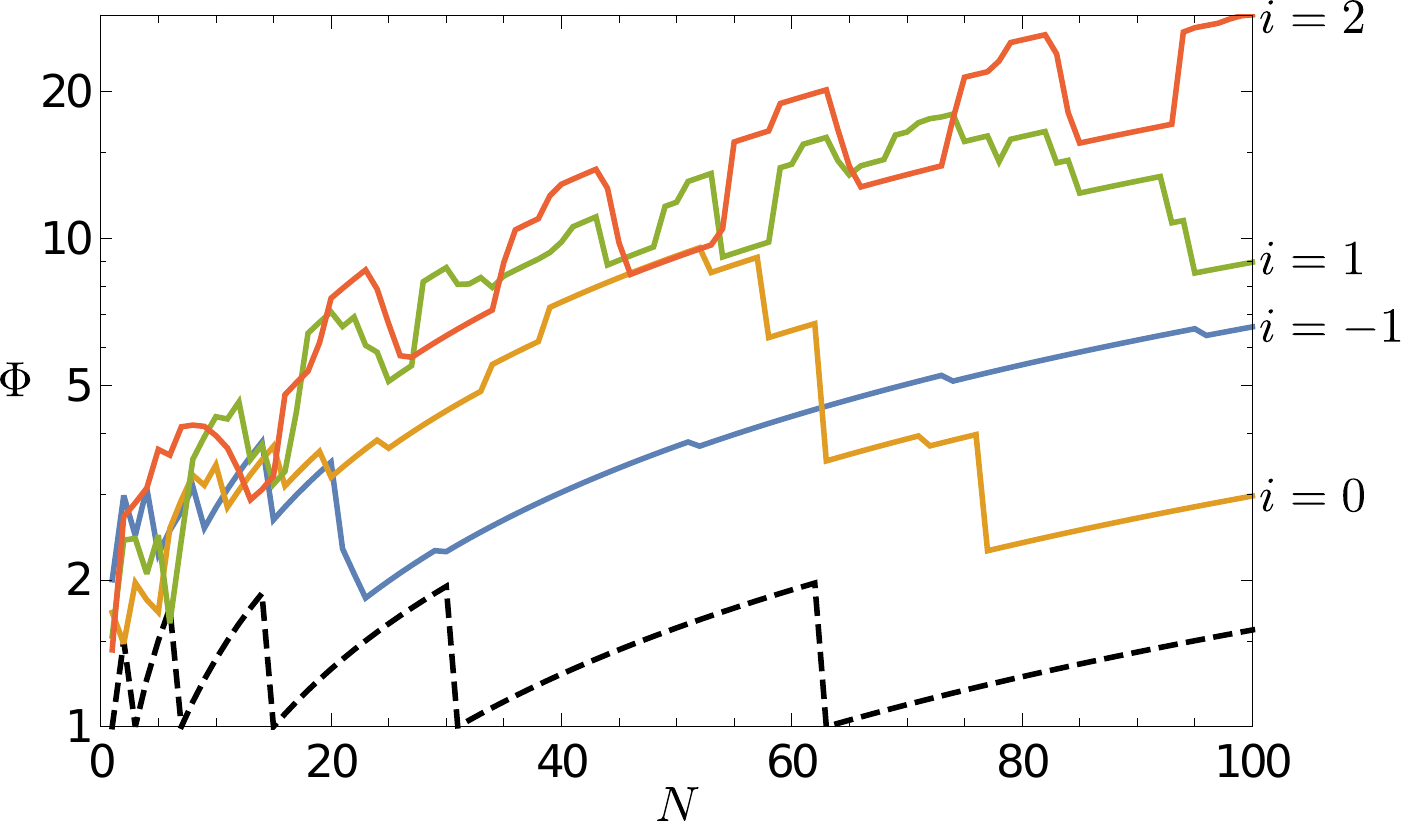}
\caption{Mixing with the \emph{ad hoc} method (dashed black), cutting the longest black and gray segments in half at each iteration and rearranging according to $\pi=132$, compared to four weak-mixing fixed IETs (solid) that use the irreducible permutation $\pi=321$ and satisfy the Keane minimality condition. For the weak-mixing fixed IETs, the same construction as Krotter \emph{et al.} is used, as demonstrated in \cref{fig:IET}, with $r=1+ 1/(2^i \pi)$ for $i=-1,0,1,2$. The optimal variable IET always results in $\Phi=1$ (the horizontal axis).}
\label{fig:ad_hoc-vs-weak_mixing}
\end{figure}

Therefore, the \emph{ad hoc} method provides a good alternative for mixing when the required number of iterations is large and finding optima for the total number of iterations is computationally prohibitive. Similar heuristics could be used for more general cutting-and-shuffling systems, such that cuts are located to bisect the largest unmixed regions.

\section{Conclusions} \label{sec:conclusions}

Variable cutting-and-shuffling strategies allow for significantly improved mixing compared to fixed cutting-and-shuffling. We have identified optimal variable cutting-and-shuffling strategies for IETs with an initial condition consisting of a number of differently colored segments, and we have shown that these optimal variable IETs can produce significantly better mixing than general fixed IETs. Furthermore, the improvement in mixing quality increases with the number of colors in the initial condition. This is because when there are more colors it is more difficult to satisfy the two competing interests: small colored segments and evenly distributed colors.

For more general systems with cutting-and-shuffling, optimizing mixing over large numbers of iterations, or when there are many cuts per iteration, is generally computationally prohibitive. This is especially true for variable strategies, where the number of control parameters increases linearly with the number of iterations. We demonstrate that an \emph{ad hoc} adaptive method, cutting the largest unmixed regions in half, provides a computationally inexpensive alternative. For IETs, this \emph{ad hoc} method is equivalent to optimizing using a one-iteration time-horizon, and yields significantly better mixing than examples of weak-mixing fixed IETs. Using the \emph{ad hoc} method, the mixing metric $\Phi$ is guaranteed to be within a constant factor of the optimum, for any number of iterations. The key to the success of the \emph{ad hoc} method is that segments of the same color never reassemble, meaning the maximum number of segments and interfaces will always be created. The adaptive nature of the \emph{ad hoc} method also means that it self corrects for any inexactness in the cut locations \cite{Yu2016}, or inexactness in the initial condition.


A key assumption when finding optimal mixing protocols is that the initial condition is known exactly. However, in many applications, the initial condition is not known exactly, but rather has a probability distribution. If the mixing quality is highly sensitive to the initial condition, then the optimal mixing protocol for a specific initial condition is irrelevant since this initial condition, or any other specific initial condition, may be unattainable in practice. In this case, it is more desirable to use a protocol that is optimal over a distribution of potential initial conditions. Future work should focus on this problem, developing strategies to optimize mixing when the initial condition has a probability distribution. This could be achieved by replacing the mixing metric with a weighted mean of the metric over the distribution of possible initial conditions. A yet more general approach could be to optimize mixing in a way that is entirely independent of the initial condition, for instance optimizing properties of the map itself. For example, maximizing the eigenvalues of the transfer operator in fluid flows maximizes the decay rate of any concentration field toward the uniform distribution \cite{Froyland2017}.

Another important question is how best to optimize mixing in more general variable systems with cutting-and-shuffling, including 2D piecewise isometries (PWIs) \cite{Scott2001, Scott2003, Haller1981, Park2016, Park2017, Juarez2010, Smith2017BSTresonances}, and systems with cutting-and-shuffling combined with stretching-and-folding and/or diffusion \cite{Ashwin2002, Sturman2012, Kreczak2017, Froyland2016, Smith2017BSTCS+SF, Smith2016discdef, Smith2017LSID, Smith2017CSSrate}. The added complexity means that exact methods like those in \cref{sec:analytic_optima} are unlikely to be possible, but numerical and heuristic optimization strategies could be developed. For example, symmetries could be used to systematically destroy non-mixing regions \cite{Franjione1989, Franjione1992}.



\appendix

\section{Color orders for cuts in optimal variable IETs} 
\label{app:color_order}

Consider the two-color initial condition, and a variable IET $T_{L,\pi}$ with permutation $\pi=132$ and $L=3$. For $T_{L,\pi}$ to produce the maximal number of new segments, in the first iteration one cut must be located within the black segment, and one cut located within the gray segment, as shown in \cref{fig:matching_edges}(a). Otherwise there would be an imbalance in the number of black and gray left and right edges of cut pieces. We also show in \cref{sec:analytic_optima} that in subsequent iterations, the maximum number of new segments will be created if we choose the first cut within a black segment and the second cut within a gray segment. However, it is not clear that this is the only choice that will work. What if we choose the first cut to be located within a gray segment, and the second cut located within a black segment? Consider the second iteration, where before cutting-and-shuffling the line, there are two black segments and two gray segments [\cref{fig:matching_edges}(b--e)]. Let $l_{2j}$ denote the colors of the left edges of the cut pieces, and $r_{2j}$ denote the colors of the right edges, as shown in \cref{fig:matching_edges}(b--e). Here we use $0$ to represent black and $1$ to represent gray. We know that $l_{21}=0$ because the left edge of the line is black, and $r_{23}=1$ because the right end of the line is gray. We show in \cref{sec:analytic_optima} that for optimal variable IETs, cuts must occur within segments, which translates to 
\begin{equation} \label{eq:edges_1}
r_{2j}=l_{2,j+1}, \quad j=1,2.
\end{equation}
After rearrangement, each black edge must join with a gray edge, which is expressed as 
\begin{equation} \label{eq:edges_2}
r_{2,\pi(j)} = l_{2,\pi(j+1)} +1 \!\! \mod 2, \quad j=1,2,3.
\end{equation}
Combining \cref{eq:edges_1}, \cref{eq:edges_2}, and the conditions $l_{21}=0$ and $r_{23}=1$, we have a linear system of equations for the variables $l_{2j}, r_{2j}$, $j=1,2,3$. Substituting \cref{eq:edges_1} into \cref{eq:edges_2} and using $l_{21}=0$, we reduce the system of equations to
\begin{equation} \label{eq:edges_reduced}
r_{2,\pi(j)} =
\begin{cases}
 r_{2,\pi(j+1)-1} + 1 \!\! \mod 2, & \text{if } \pi(j+1) \neq 1 \\
 1, & \text{if } \pi(j+1)=1,
 \end{cases}
\end{equation}
for $j=1,2,3$, so that $r_{21}$ and $r_{22}$ are the only two variables. Substituting $j=1,2,3$ into \cref{eq:edges_reduced}, we obtain
\begin{align} \label{eq:edges_reduced_2}
r_{21} &= r_{22} + 1, \nonumber \\ 
r_{23} &= r_{21} + 1, \\
r_{22} &= 1, \nonumber
\end{align}
where each equation is modulo $2$. This has the unique solution $r_{21}=0$, $r_{22}=1$. Hence, the first cut must occur within a black segment, and the second cut must occur within a gray segment. There are no other possibilities that will yield optimal mixing. Furthermore, \cref{eq:edges_1}--\cref{eq:edges_reduced_2} hold for all $N$, just replace $r_{2j}$ with $r_{Nj}$ and $l_{2j}$ with $l_{Nj}$, and so the first cut must always occur within a black segment, and the second cut within a gray segment.

Following the same procedure as above, \cref{eq:edges_reduced} must be satisfied for any variable IET $T_{L,\pi}$ that produces optimal mixing. We discuss two cases with $L=5$. First, let $\pi = 14253$, which satisfies \cref{prop:valid_perms}, and so can produce optimal mixing. Substituting $j=1,\dots,5$ into \cref{eq:edges_reduced}, we obtain
\begin{align} \label{eq:edges_reduced_3}
r_{N1} &= r_{N3} + 1, \nonumber \\ 
r_{N4} &= r_{N1} + 1, \nonumber \\
r_{N2} &= r_{N4} + 1, \\
r_{N5} &= r_{N2} + 1, \nonumber \\
r_{N3} &= 1, \nonumber
\end{align}
where each equation is modulo $2$. In addition, $r_{N5}=1$ because the right edge of the line must always be gray, since the left edge is always black [$\pi(1)=1$]. The system specified by \cref{eq:edges_reduced_3} has the unique solution $r_{N1}=r_{N2}=0$, $r_{N3}=r_{N4}=1$, which means the first two cuts must always occur within black segments, and the last two cuts must always occur within gray segments. 

Now consider $\pi = 14325$. Substituting $j=1,\dots,5$ into \cref{eq:edges_reduced}, we obtain
\begin{align} \label{eq:edges_reduced_4}
r_{N1} &= r_{N3} + 1, \nonumber \\ 
r_{N4} &= r_{N2} + 1, \nonumber \\
r_{N3} &= r_{N1} + 1, \\
r_{N2} &= r_{N4} + 1, \nonumber \\
r_{N5} &= 1, \nonumber
\end{align}
where each equation is modulo $2$. Note that the first and third equations are equivalent, as are the second and fourth. In addition, the last equation provides no new information. We already know that $r_{N5}=1$ because the right edge of the line must always be gray, since the left edge is always black [$\pi(1)=1$]. Therefore, for the unknown variables $r_{N1},\dots,r_{N4}$ we effectively have only two equations. The color at the first cut must be opposite to the color at the third cut, and likewise for the second and fourth cuts, but we are free to choose the colors of the third and fourth cuts. Hence there are four possible solutions:
\begin{align}
(r_{N3},r_{N4})=(0,0) &\Rightarrow (r_{N1},r_{N2})=(1,1), \nonumber \\
(r_{N3},r_{N4})=(0,1) &\Rightarrow (r_{N1},r_{N2})=(1,0), \\
(r_{N3},r_{N4})=(1,0) &\Rightarrow (r_{N1},r_{N2})=(0,1), \nonumber \\
(r_{N3},r_{N4})=(1,1) &\Rightarrow (r_{N1},r_{N2})=(0,0), \nonumber 
\end{align}
which means that at each iteration cuts can occur in the order gray, gray, black, black (GGBB), GBBG, BGGB, or BBGG, respectively. Therefore, there are many more optimal variable IETs that use the permutation $14325$ compared to the permutation $14253$.

This approach can be repeated for any permutation that satisfies \cref{prop:valid_perms}, and can also be extended to initial conditions with $k$ colors by simply changing \cref{eq:edges_2} to be modulo $k$.


\end{document}